\documentclass[11pt]{article}
\usepackage[margin=1in]{geometry}
\usepackage{mathpazo}
\usepackage{amsmath,amssymb,amsthm,booktabs,tikz,xcolor}
\usetikzlibrary{calc,positioning}

\usepackage{amssymb}
\usepackage{amscd,amsfonts,amsmath,amssymb, bbm,dsfont,extarrows}
\usepackage{amsmath}
\usepackage{amsfonts}
\usepackage{amssymb}\usepackage{enumerate,epsf,fancyhdr,float,graphicx,tabularx}
\usepackage{latexsym,mathrsfs,multirow}
\usepackage{wasysym, microtype,needspace}
\usepackage{xypic}
\usepackage[hidelinks]{hyperref}
\usepackage[all]{xy}\usepackage[OT2,T1]{fontenc}
\usepackage[modulo,mathlines,displaymath, running]{lineno}
\usepackage{amsmath,mathrsfs,enumerate,enumitem,wasysym}
\usepackage{xypic,hhline}
\usepackage[all]{xy}
\usepackage[OT2,T1]{fontenc}
\usepackage[modulo,mathlines,displaymath]{lineno}
\usepackage{tikz,tikz-cd}
\usepackage{dashrule}
\usetikzlibrary{matrix}
\usepackage[modulo,mathlines,displaymath]{lineno}

\newtheorem{theorem}{Theorem}[section]
\DeclareSymbolFont{cyrletters}{OT2}{wncyr}{m}{n}\DeclareMathSymbol{\Sha}{\mathalpha}{cyrletters}{"58}

\renewcommand{\phi}{{\varphi}}

\renewcommand{\geq}{\geqslant}
\renewcommand{\leq}{\leqslant}

\newcommand{\links}{\left(\begin{array}{cc}}
\newcommand{\rechts}{\end{array}\right)}
\newcommand{\bai}{\left[\begin{array}{cc}}
\newcommand{\dai}{\end{array}\right]}
\newcommand{\hidari}{\left(\begin{array}{c}}
\newcommand{\migi}{\end{array}\right)}

\newcommand{\fr}{\mathcal F}
\newcommand{\G}{{\mathcal G}}

\renewcommand{\contentsname}{Contents\\{\footnotesize\normalfont(A table
of contents should normally not be included)}}

\newtheorem{auxiliary proposition}[theorem]{Auxiliary Proposition}

\newtheorem{corollary}[theorem]{Corollary}

\newtheorem{definition}[theorem]{Definition}

\newtheorem{lemma}[theorem]{Lemma}
\newtheorem{main conjecture}[theorem]{Main Conjecture}
\newtheorem{main theorem}[theorem]{Main Theorem}
\newtheorem{modesty proposition}[theorem]{Modesty Proposition}

\newtheorem{observation}[theorem]{Observation}
\newtheorem{open problem}[theorem]{Open Problem}

\newtheorem{proposition}[theorem]{Proposition}

\newtheorem{convergence lemma}[theorem]{Convergence Lemma}
\newtheorem{corrected lemma}[theorem]{Corrected Lemma}
\newtheorem{growth lemma}[theorem]{Growth Lemma}
\newtheorem{coefficient lemma}[theorem]{Integrality Lemma}
\newtheorem{interpolation lemma}[theorem]{Interpolation Lemma}
\newtheorem{kernel lemma}[theorem]{Kernel Lemma}
\newtheorem{limit lemma}[theorem]{Limit Lemma}
\newtheorem{tandem lemma}[theorem]{Modesty Lemma}
\newtheorem{zero-finding lemma}[theorem]{Zero-Finding Lemma}

\newcommand{\firstdiamond}{\begin{tikzpicture}[baseline=-2pt,x=0.29cm,y=0.22cm,every node/.style={inner sep=0pt,font=\scriptsize}]
\node at (-1,0) {$a$};\node at (0,1) {$b$};\node at (1,0) {$c$};\node at (0,-1) {$d$};
\end{tikzpicture}}
\newcommand{\ordinarydiamond}{\begin{tikzpicture}[baseline=-2pt,x=0.30cm,y=0.23cm,every node/.style={inner sep=0pt,font=\scriptsize}]
\node at (-1,0) {$a$};\node at (0,1) {$b$};\node at (1,0) {$d$};\node at (0,-1) {$c$};
\end{tikzpicture}}
\newcommand{\forkdiamond}{\begin{tikzpicture}[baseline=-2pt,x=0.33cm,y=0.26cm,every node/.style={inner sep=0pt,font=\scriptsize}]
\node at (-1,0) {$e$};\node at (0,1) {$f$};\node at (0,0) {$g$};\node at (0,-1) {$h$};\node at (1,0) {$i$};
\end{tikzpicture}}
\newcommand{\onediamond}{\begin{tikzpicture}[baseline=-2pt,x=0.33cm,y=0.26cm,every node/.style={inner sep=0pt,font=\scriptsize}]
\node at (-1,0) {*};\node at (1,0) {*};\node at (0,1) {$1$};\node at (-1,-1) {$j$};\node at (0,-1) {$k$};\node at (1,-1) {$\ell$};
\end{tikzpicture}}

\newcommand{\friezearray}{%
\begin{tikzpicture}[x=1.10cm,y=0.51cm,every node/.style={inner sep=1pt,font=\small}]
\foreach \x in {0,2,4,6,8} {\node at (\x,0) {$1$};}
\node at (-1,0) {$\cdots$};
\foreach \x/\j in {1/1,3/2,5/3,7/4}{\node at (\x,-1) {$x_{2,\j}$};}
\node at (-.5,-1) {$\cdots$};
\foreach \x/\j in {0/0,2/1,4/2,6/3,8/4}{\node at (\x,-2) {$x_{3,\j}$};}
\foreach \x/\j in {1/1,3/2,5/3,7/4}{\node at (\x,-2) {$x_{1,\j}$};}
\node at (-1,-2) {$\cdots$};
\foreach \x/\j in {1/0,3/1,5/2,7/3}{\node at (\x,-3) {$x_{4,\j}$};}
\node at (-.5,-3) {$\cdots$};
\foreach \x/\j in {0/{-1},2/0,4/1,6/2,8/3}{\node at (\x,-4) {$x_{5,\j}$};}
\node at (-1,-4) {$\cdots$};
\node at (1,-5) {$\vdots$};
\foreach \x/\j in {0/{-2},2/{-1},4/0,6/1,8/2}{\node at (\x,-6) {$x_{n,\j}$};}
\foreach \x in {1,3,5,7}{\node at (\x,-7) {$1$};}
\end{tikzpicture}}

\tikzset{msarc/.style={line width=.55pt},msvertex/.style={circle,fill=black,inner sep=1.35pt},mslabel/.style={inner sep=1pt,font=\small,fill=white}}
\newcommand{\exchangerules}{%
\begin{tikzpicture}[x=1cm,y=1cm,msarc,every node/.style={mslabel}]
\begin{scope}[shift={(1.3,0)}]
\coordinate (A) at (-.95,.95);\coordinate (B) at (.95,.95);
\coordinate (C) at (.95,-.95);\coordinate (D) at (-.95,-.95);
\draw (A)--node[above]{$a$}(B)--node[right]{$b$}(C)--node[below]{$c$}(D)--node[left]{$d$}(A);
\draw (A)--(C);\draw (D)--(B);
\node at (-.45,-.62) {$x$};\node at (.47,-.62) {$y$};
\foreach \p in {A,B,C,D}{\node[msvertex] at (\p) {};}
\node at (0,-1.58) {Ptolemy: $xy=ac+bd$.};
\end{scope}

\begin{scope}[shift={(5.15,0)}]
\draw (0,0) circle (1cm);
\coordinate (A) at (36:1);    
\coordinate (B) at (145.5:1); 
\draw (0,0)--(A);
\draw (0,0)--node[pos=.30,inner sep=0pt, font=\tiny] {$\bowtie$} (B);
\draw (0,0)..controls(-.05,.34)and(-.42,.62)..(B);
\foreach \p in {A,B}{\node[msvertex] at (\p) {};}
\node[msvertex] at (0,0) {};
\node at (-1.10,-.39) {$a$};
\node at (0.69,1.04) {$b$};
\node at (.33,.22) {$y$};
\node at (-.51,.11) {$x$};
\node at (-.50,.56) {$u$};
\end{scope}

\begin{scope}[shift={(8.45,0)}]
\draw (0,0) circle (1cm);
\coordinate (A) at (78:1);    
\coordinate (B) at (-28:1);  
\draw (0,0)--node[pos=.30,inner sep=0pt, font=\tiny] {$\bowtie$} (A);
\draw (0,0)--(B);
\draw (0,0)..controls(.08,-.30)and(.50,-.56)..(B)
  node[pos=.24,inner sep=0pt, font=\tiny] {$\bowtie$};
\foreach \p in {A,B}{\node[msvertex] at (\p) {};}
\node[msvertex] at (0,0) {};
\node at (-.96,.54) {$a$};
\node at (1.12,-.02) {$b$};
\node at (.07,.56) {$y$};
\node at (.45,-.16) {$x$};
\node at (.23,-.58) {$u$};
\end{scope}
\node at (6.80,-1.58) {Clock-hand rule: $xy=a+b$.};

\begin{scope}[shift={(12.35,0)}]
\coordinate (A) at (-.953,.55);
\coordinate (B) at (.953,.55);
\coordinate (C) at (0,-1.10);
\draw (0,0) circle (1.10cm);
\draw (A)..controls(.77,1.2)and(1.1,-.6)..(C);
\draw (B)..controls(-.77,1.2)and(-1.1,-.6)..(C);
\draw (0,0)--(C);
\draw (0,0)..controls(-.31,-.38)and(-.31,-.85)..(C);
\foreach \p in {A,B,C}{\node[msvertex] at (\p) {};}
\node[msvertex] at (0,0) {};
\node[font=\tiny,inner sep=0pt] at (0,-.24) {$\bowtie$};
\node at (0,1.32) {$a$};
\node at (-1.17,-.48) {$c$};
\node at (1.17,-.48) {$b$};
\node at (-.64,-.24) {$x$};
\node at (.64,-.24) {$y$};
\node at (-.41,-.62) {$u$};
\node at (.18,-.64) {$t$};
\node at (0,-1.58) {Candle-flame rule: $xy=bc+atu$.};
\end{scope}
\end{tikzpicture}}

\newcommand{\radiiquadrilateral}{%
\begin{tikzpicture}[x=.93cm,y=.93cm,msarc,every node/.style={mslabel}]
\coordinate (P) at (-1.2,0);\coordinate (A) at (0,.95);\coordinate (B) at (1.05,0);\coordinate (C) at (0,-.95);
\draw (P)--node[above left]{$r_{j-1}$}(A)--(B)--(C)--node[below left]{$r_{j+1}$}(P);
\draw (P)--node[above,pos=.38]{$r_j$}(B);\draw (A)--(C);
\foreach \p in {P,A,B,C}{\node[msvertex] at (\p) {};}
\node[left=3pt] at (P) {$p$};\node[above=3pt] at (A) {$v_{j-1}$};
\node[right=3pt] at (B) {$v_j$};\node[below=3pt] at (C) {$v_{j+1}$};
\draw (.08,.25)--(.65,.69);\node[above right] at (.61,.66) {$m_{j-1,j+1}$};
\end{tikzpicture}}

\newcommand{\comparisonDigon}{%
\begin{tikzpicture}[x=1cm,y=1cm,line cap=round,line width=.5pt,
 every node/.style={font=\small,inner sep=2pt}]
 \begin{scope}[shift={(-2.65,0)}]
  \coordinate (V) at (150:1.25); \coordinate (W) at (-20:1.25);
  \coordinate (P) at (0,0);
  \draw (0,0) circle (1.25);
  \draw (P)--(V);\draw (P)--(W);
  \node[font=\tiny,inner sep=0pt,fill=white] at (-20:.28) {$\bowtie$};
  \foreach \q in {V,W,P}{\fill (\q) circle (1.4pt);}
  \node[above left] at (V) {$v$};\node[below right] at (W) {$w$};
  \node[below left] at (P) {$p$};
  \node at (0,1.49) {$a$};\node at (0,-1.49) {$a'$};
  \node[fill=white] at (-.69,.09) {$r_v$};
  \node[fill=white] at (.68,-.61) {$r_w^{\bowtie}$};
  
 \end{scope}
\end{tikzpicture}}

\title{\bfseries How large can $B_n$- and $D_n$-friezes be?}
\author{ Florian Ito Sprung\footnote{Current address: 1 Oxford Street, Department of Mathematics, Harvard University, Cambridge MA 02138\\ E-mail: ian.sprung@gmail.com}}
\date{}
\begin{document}
\maketitle

\begin{abstract}
We pin down the largest entries possible in positive integral friezes
of types $D_n$ for $n\geq 4$ and $B_n$ for $n\geq 2$, addressing a conjecture of Robin Zhang. For type $D_n$, the sharp upper bound is $F_nF_{n+1}-1$, and for type $B_n$, it is $F_{n+1}F_{n+2}-1$, where $F_k$ are the Virahanka--Fibonacci numbers.
\end{abstract}

\section{Introduction}
The arithmetic of friezes goes back to at least the work of Conway and Coxeter \cite{ConwayCoxeter1973a,ConwayCoxeter1973b}, who developed the subject by posing (and answering) a number of questions on how one can fill out arrays of integers satisfying the \textit{Ptolemy relations}, where for each diamond \firstdiamond, we require $ac=bd+1$.

Conway and Coxeter established that their friezes corresponded to triangulations of polygons. Subsequently, their work was generalized by making a connection to Lie theory, more precisely to finite-type cluster algebras and their Dynkin diagrams. Conway and Coxeter's friezes correspond to Dynkin diagrams of type $A_n$, and friezes coming from other Dynkin diagrams were defined.

One natural question one might ask is how big the (positive integer) entries of a frieze may be. In the case of friezes of type $A_n$ (and also $C_n$), this question was settled by Cheah and de Saint Germain, who showed that the entries could attain but not exceed the Virahanka--Fibonacci numbers $F_{n+2}$ (resp.\ $F_{2n+1}$) \cite[Theorem~1]{CheahSaintGermain2025}, defined by $F_0=0$, $F_1=1$, and $F_{k+2}=F_{k+1}+F_k$.
For friezes of other types, there is now a vast web of results -- for example, the answer for the exceptional types $E_6$ and $G_2$ is $307$ and $14$, respectively \cite[Figure~2]{Zhang2026}. Despite a lot of effort by the experts, a sharp upper bound on the entries of friezes of type $D_n$ and $B_n$ is still outstanding. Here, Robin Zhang constructed $D_n$-friezes whose maximal entries were $F_nF_{n+1}-1$, and  $B_n$-friezes with maximal entries $F_{n+1}F_{n+2}-1$ \cite[Theorem~2]{Zhang2026}. In \cite[Conjecture~3]{Zhang2026}, Zhang conjectured that these were the largest possible entries in such friezes.

The point of this paper is to propose a proof of Robin Zhang's conjecture.

\begin{theorem}[Corollary \ref{cor:mainresult}]

Let $n\geq2$. Then every entry in a positive integral $B_n$-frieze is at most $F_{n+1}F_{n+2}-1$. Let $n\geq4$. Then every entry in a positive integral $D_n$-frieze is at most $F_nF_{n+1}-1$. Both bounds are sharp.
\end{theorem}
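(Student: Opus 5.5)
Sharpness is already given by Zhang's constructions \cite[Theorem~2]{Zhang2026}, so the work is the upper bound. The plan is to use the combinatorial models suggested by the exchange rules pictured above. A positive integral $D_n$-frieze corresponds to a triangulation of a once-punctured $n$-gon in which each arc carries a positive integer weight; in this model the frieze entries are the $\lambda$-lengths of the arcs, including tagged (notched, $\bowtie$) arcs, and they are computed by the Ptolemy, clock-hand and candle-flame rules. A $B_n$-frieze is obtained by folding a $D_{n+1}$-frieze, so its entries form a subset of the entries of a $D_{n+1}$-frieze. The reduction is then immediate: the $D_{n+1}$ bound $F_{n+1}F_{n+2}-1$ yields the $B_n$ bound. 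I also need to handle the small cases $B_2=C_2$ and $B_3$ through $D_3=A_3$ or $D_4$ directly. So everything reduces to proving that every entry of a $D_n$-frieze is at most $F_nF_{n+1}-1$.

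For type $D_n$, I will induct on $n$ using a standard cutting argument. A positive integral frieze always has an entry equal to $1$ on the boundary-adjacent diagonals. Equivalently, the triangulation has an ear, a boundary vertex of degree two in the polygon part, and removing it reduces the frieze to a $D_{n-1}$-frieze. Each entry of the big frieze should then be expressible as a sum of at most two entries of the smaller frieze, as in the $A_n$ argument of Cheah and de Saint Germain. In the $A_n$ case this gives Fibonacci growth $F_{n+2}=F_{n+1}+F_n$. Here, however, the target $G_n:=F_nF_{n+1}-1$ satisfies a three-term recurrence, since $F_nF_{n+1}$ obeys $G_{n+1}+1=2(G_n+1)+2(G_{n-1}+1)-(G_{n-2}+1)$. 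A naive two-term bound will therefore overshoot. The refined induction should track separately two quantities: the maximal ordinary arc length $M_n$, meaning arcs between boundary vertices passing around the puncture, and the maximal radius length $R_n$, meaning arcs to the puncture whether plain or notched.

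The key intermediate claims I aim for are as follows. First, the radii satisfy $r_v+r_v^{\bowtie}\le$ an $A$-type bound, and $r_vr_v^{\bowtie}$ is controlled by the clock-hand rule $xy=a+b$; this should give $R_n\le F_{n+1}$ or a similar Fibonacci-size bound. Second, an arc from $v$ to $w$ going around the puncture equals $r_vr_w^{\bowtie}+r_v^{\bowtie}r_w$, or a similar digon identity from the comparison picture with $a$ and $a'$. With radius bounds of product form $F_k$ and $F_{n+1-k}$, this expression is bounded by something like $F_nF_{n+1}$. The "$-1$" should come from the positivity and integrality constraint that the two products cannot both be maximal at the same time: some arc must have length at least $1$, and it enters with a negative sign, as in $xy=bc+atu$ rearranged.

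The main obstacle will be this second step. I need to show that the maximum of $r_vr_w^{\bowtie}+r_v^{\bowtie}r_w$ over all admissible configurations is exactly $F_nF_{n+1}-1$, and not merely $O(\phi^{2n})$. I would first try to parametrize all entries on the puncture side by the radii $r_1,\dots,r_n$, using the Ptolemy quadrilateral relations $r_{j-1}r_{j+1}=r_j m_{j-1,j+1}+\dots$. I would then treat the maximization as an optimization over sequences of radii satisfying the divisibility conditions of the quiddity-type recurrence. Extremal sequences should be the Fibonacci-like ones realized by Zhang's examples, and I would argue that any local deviation from them strictly decreases the product sum. The final check is on the edge cases where $v$ and $w$ are adjacent or coincide, which produce the entries $r_vr_v^{\bowtie}$.
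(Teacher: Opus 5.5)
Your treatment of sharpness (Zhang's constructions) and of $B_n$ (lifting to $D_{n+1}$, with $B_2\cong C_2$ handled separately) matches the paper. The $D_n$ bound, however, is the real content, and your proposal does not establish it.

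\textbf{The steps as written fail.}
\begin{itemize}
\item The ear induction does not get off the ground. Unlike type $A$, a $D_n$-frieze need not contain any entry $1$: for $n=4$, four spokes labelled $2$ give radii all equal to $2$ and remaining entries $2$ and $3$. Moreover, when every boundary vertex carries a spoke there is no ear at all.
\item The replacement step misreads the clock-hand rule. For $v\neq w$ and the two arcs $a,a'$ joining them on either side of the puncture, the rule says $a+a'=r_vr_w^{\bowtie}=r_v^{\bowtie}r_w$, not $a=r_vr_w^{\bowtie}+r_v^{\bowtie}r_w$. Your quantity is $2(a+a')$, which is at least $2F_nF_{n+1}$ whenever one of the arcs attains the bound. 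So the optimization you name as the main obstacle has the wrong target. The correct consequence is simply $a\le r_vr_w^{\bowtie}-1$, and that is where the $-1$ comes from.
\item Your radius claim $R_n\le F_{n+1}$ fails for notched radii. Take $n=4$, $d=1$ and spokes at two adjacent vertices. The plain radii can be $r_1,r_2,r_3,r_4=1,1,3,2$ in cyclic order, and the notched radius over the $3$ is $2\cdot 3=6>F_5$.
\end{itemize}

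\textbf{What is missing} is a sharp bound on the mixed product $r_vr_w^{\bowtie}$. The paper obtains it from three ingredients absent from your plan.
\begin{itemize}
\item The Fontaine--Plamondon model is rigid. Every non-spoke arc is labelled $1$ and the $s$ spokes share a single label $d\mid s$. The clock-hand rule then forces $r_v^{\bowtie}=\frac{s}{d^2}r_v$ for every $v$.
\item For $d=1$, cutting along a spoke turns $(r_2,\dots,r_n)$ into an $A_{n-1}$-chain with at least $s-1$ entries equal to $1$. The factor $s$ is thus paid for by a shorter chain. This is combined with a pair version of the Cheah--de Saint Germain bound: $a_ka_\ell\le F_{m+1}F_{m+2}$ for $k\neq\ell$, proved by induction on $m$ through the index $r$ with $a_r=a_{r-1}+a_{r+1}$. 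Together these give $a\le sF_{n+1-s}F_{n+2-s}-1\le F_nF_{n+1}-1$ and $r_v^{\bowtie}\le sF_{n+2-s}$.
\item General $d$ reduces to $d=1$. Arcs between boundary vertices carry the same labels as in the frieze $U_1$ with spokes relabelled $1$. The clock-hand rule with a spoke vertex $w\neq v$ then gives $\mathcal F(r_v),\mathcal F(r_v^{\bowtie})\le U_1(r_v^{\bowtie})$.
\end{itemize}
Without these, the expectation that extremal radius sequences are Fibonacci-like, and that local deviations decrease the target, remains a heuristic rather than a proof.
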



Although Dynkin friezes have their origin in the theory of cluster algebras due to Fomin and Zelevinsky \cite{FominZelevinsky2003}, they can be defined purely combinatorially\cite[Definition~2.1]{Zhang2026}:
\begin{definition}[Dynkin frieze]
Given a finite Dynkin type $\Delta_n$ with generalized Cartan matrix $C=(c_{i,j})_{1\le i,j\le n}$, a positive integral frieze of type $\Delta_n$ is an array of positive integers $(x_{i,j})_{1\le i\le n,\,j\in\mathbb Z}$ satisfying
\begin{equation}\label{eq:frieze}
 x_{i,j}x_{i,j+1}-\prod_{k<i}x_{k,j+1}^{|c_{k,i}|}\prod_{k>i}x_{k,j}^{|c_{k,i}|}=1
 \qquad\forall i\ \forall j.
\end{equation}
\end{definition}

Most relevant to our paper is the case $\Delta_n=D_n$. Here, the entries of $C$ are $c_{ii}=2, c_{j(j+1)}=c_{(j+1)j}=-1$ if $j\geq2$, and $c_{13}=c_{31}=-1$, while all other $c_{ij}=0$, and this array is conveniently represented as
\begin{center}
\friezearray
\end{center}

The subdiamonds \ordinarydiamond \quad,\quad  \forkdiamond \quad, and \onediamond are required to satisfy the equations $ad-bc=1$, $ei-fgh=1$, and $j\ell=1+k$. (The last diamond is the first diamond in disguise: the extra middle entries $x_{1,j}$ can be thought of as `alternatives' to the entries $x_{2,j}$ directly above them. We thus require the relations $x_{1,j}x_{1,j+1}=1+x_{3,j}$.)
For many examples of frieze patterns, see \cite{deSaintGermainVisualCA}.


The rough summary of the proof in the case of $D_n$-friezes is to cut open a triangulation associated to $D_n$, which we call the \textit{plafond triangulation}\footnote{
This weighted triangulation is due to Fontaine and Plamondon \cite[Proposition~3.2]{FontainePlamondon2016}, who built heavily on ideas of Baur and Marsh \cite[Theorem~1.1 and Definition~6.7]{BaurMarsh2009} and Thomas \cite[Proposition~A.2]{BaurMarsh2009} . The underlying geometric model was developed by Schiffler \cite[Section~5.2]{Schiffler2008} and \cite[Section~7]{FominShapiroThurston2008}. The term is the French word
for ``ceiling,'' so the ``d'' at the end of ``plafond'' is not pronounced.
} to make it amenable to the techniques of $A_n$-type friezes, and to then generalize Cheah and de Saint Germain's bounds to products of pairs of entries, which under the arithmetic of the triangulation give us the desired upper bound. 

\textbf{The proof strategy, along with an informal description of the plafond traingulation.}

Fontaine and Plamondon showed that each $D_n$-frieze corresponds to a plafond triangulation, which is a triangulation of the $n$-gon with a puncture inside, in analogy to the result of Conway and Coxeter. Conway and Coxeter ``unpack'' a triangulation of an $n$-gon following the Ptolemy rule to obtain an $A_{n-3}$-frieze. Fontaine and Plamondon, building on ideas of Baur, Marsh, and Thomas \cite{BaurMarsh2009}, generalized this to $D_n$-friezes and showed that a plafond triangulation can be similarly unpacked by following three rules. In addition to the Ptolemy rule, we also have rules that involve the puncture. These are the clock-hand rule and the candle-flame rule sketched below, in which the puncture is the point in the center:
\begin{center}
    \exchangerules
\end{center}    

The plafond triangulation always involves \textit{spokes}, each of which connects the puncture to a vertex of the $n$-gon. For the moment, let us assume that the label in the frieze along all spokes is $1$. (The other arcs in the plafond triangulation all have label $1$ by construction.). By cutting along this spoke, the puncture becomes a boundary point, and we may thus extract out of the $D_n$-frieze\footnote{The idea of ``cutting'' is already visible in \cite[proof of Lemma~3.4]{FontainePlamondon2016} in the form of ``compatible arcs'', and originally due to Hugh Thomas \cite[proof of Proposition~A.2]{BaurMarsh2009}.} an $n+2$-gon, giving us an $A_{n-1}$-frieze. We are interested in what happens to the \textit{plain radii} of the $D_n$-frieze after cutting into the $A_{n-1}$-frieze. (A radius is \textit{any} arc in the $D_n$-frieze that connects the puncture to a boundary point, not just the spokes. Spokes are radii that happen to be in the plafond triangulation. There are two types of radii, \textit{plain} and \textit{notched}.) Repeatedly applying the Ptolemy rule, we can see that the labels along these plain radii form an $A_{n-1}$-chain, i.e. the diagonal of an $A_{n-1}$-frieze\footnote{Since we cut along a spoke, which by assumption was labeled by a $1$, so that the first and last entry in the chain are both $1$s, which we truncate.}. 

Cheah and de Saint Germain gave the sharp upper bound $F_{m+2}$ for entries in an $A_m$-chain\footnote{Cheah and de Saint Germain's central idea is this: They showed that such a diagonal $(a_1,\ldots,a_m)$ has a special index $r$ (the \textbf{r}ight index) which satisfies $a_r=a_{r-1}+a_{r+1}$ when $a_m\neq 1$. Their bounds on the entries came from the observation that leaving out $a_r$ (or $a_m$, in case $a_m=1$) gives rise to a diagonal which is an $A_{m-1}$-chain. Thus, $a_r$ can be described as the sum of the entries $a_{r-1}$ and $a_{r+1}$ from the smaller $A_{m-1}$-chain. Proceeding by induction, one gets the bound of Virahanka--Fibonacci numbers: $a_j\le F_{m+2}$.}. This allows us to bound the labels of the arcs in our $D_n$-frieze that are radii (by $F_{n+1}$), but how about the other arcs, e.g. those arcs that connect boundary points? 
For the arcs connecting boundary points, the clock-hand rule above says that the sum of the label of an arc and its complementary arc (connecting the same boundary points but going around the puncture in the opposite way) is the product of the labels of the radii connecting the arc and the puncture, with one caveat: among the two radii involved, one must be plain, and one must be notched.
If we used Cheah and de Saint Germain's bound naively, these rules would give us bounds of the form $F_{m+2}^2$, which is too coarse for our purposes. We thus revisit their work \cite{CheahSaintGermain2025} and generalize their techniques to show that a product of two entries at distinct positions in an $A_m$-chain is bounded not only by $F_{m+2}^2$, but by $F_{m+2}F_{m+1}$ -- the bound is sharp. The clock-hand rule thus seems to imply that the sum of the label of an arc and its complement is bounded above by $F_{m+2}F_{m+1}$, and since the labels are positive, each label would be at most $F_{m+2}F_{m+1}-1$, which looks like Zhang's predicted bound. However, there is a problem with this argument. The $xy$-term in the clock-hand rule is a product of a plain label and a \textit{notched} label (remember the caveat). However, our $F_{m+2}F_{m+1}$-bound applies to products of plain radii. To address the labels of the notched radii, we need to dive a bit deeper. Denoting by $s$ the number of spokes in the plafond triangulation, Baur and Marsh \cite[Lemma 5.5]{BaurMarsh2009} essentially proved that the label of a notched radius is $s$ times the label of its plain counterpart, cf. Observation \ref{obs:tagexchange}. Thus, the upper bound on the $a+b$ term in the clock-hand rule is $sF_{m+2}F_{m+1}$, which seems hopelessly large at first glance\footnote{It may get worse: The radii may lie in different $A_m$-chains, potentially giving an even worse bound.}. One central non-obvious step gets us out of this rut: Since the $s$ spokes are each labeled by $1$, they split the $A_{n-1}$-chain into smaller $A_m$-chains -- in fact, $m$ is at most $n-s$, so that the upper bound is $sF_{n+2-s}F_{n+1-s}$ in the case of one big $A_{n-s}$-chain. It is not hard to show that this bound also holds when we have multiple $A_m$-chains because their total length is bounded by $n-s$. An elementary analysis of Virahanka-Fibonacci numbers shows $sF_{n+2-s}F_{n+1-s}\leq F_{n+1}F_n$, and positivity of the labels of the arcs implies each is bounded by $F_{n+1}F_n-1$, which is indeed Zhang's conjectured bound! The only remaining arcs are the notched radii, but note that we just showed they are bounded above by $sF_{m+2}\leq sF_{n+2-s}<F_{n+1}F_n$.

This resolves the conjecture when the spokes in the plafond triangulation were labeled by $1$. In the remaining cases, the plafond triangulation involves spokes labeled by the same integer $d$ --  required to \textbf{d}ivide the number $s$ of spokes. Fontaine and Plamondon compare such a plafond triangulation with its two extreme variations, i.e. the same plafond triangulations, but with all $1$'s (denoted $U_1$) or all $s$'s on the spokes ($U_s$), and show that the non-radii have the same labels \cite[Theorem 4.7 and sentence after Lemma 4.9]{FontainePlamondon2016}. This almost reduces the proof to the case of plafond triangulations with $1$'s on the spoke -- we only have to handle the radii. \cite[Lemma 4.8]{FontainePlamondon2016} seems to worsen our situation, since it says that that the label of a plain radius is $d$ times that of the same plain radius in $U_1$. What comes to the rescue is an even worse bound: In $U_1$, one can find a radius with an even worse label (it is the notched radius, and \ref{obs:tagexchange} shows it is worse since it is $s\geq d$ times as large as that of the plain radius). But we already handled the $U_1$ case, so this seemingly even worse label is bounded above by Zhang's predicted bound, so we are done. 


The result for $B_n$-friezes follows directly from that of $D_{n+1}$-friezes when $n\geq 3$. \cite[Lemma~4.1 and proof of Theorem 4.3]{FontainePlamondon2016} show that every positive integral $B_n$-frieze lifts to a positive integral $D_{n+1}$-frieze with no new entries. For $B_2$, the result is already known, as $B_2\cong C_2$ \cite[Theorem~1(2)]{CheahSaintGermain2025}.

\section{The plafond triangulation}\label{sec:plafond-details}

Conway and Coxeter constructed a bijection between frieze patterns of $A_n$-type and triangulations of the $n+3$-gon, see \cite[Questions]{ConwayCoxeter1973a}, \cite[Solutions]{ConwayCoxeter1973b}, and also \cite[Theorem~2.1]{FontainePlamondon2016}.

Analogously, Fontaine and Plamondon proved \cite[Proposition~3.2]{FontainePlamondon2016} that each $D_n$-frieze comes from a \textit{plafond triangulation}. These are defined when $n\geq4$, which we assume for the rest of the paper. There seem to be two differing conventions about triangulations in the literature of an $n$-gon. In one, the boundary edges are not included \cite{FontainePlamondon2016}\cite[Definitions~2.2 and 2.6]{FominShapiroThurston2008}\cite[Definitions~2.1--2.2]{BaurMarsh2009}\footnote{This is compatible with the fact that it is the interior arcs that correspond to cluster variables.}. In the other, the boundary edges are included \cite[Section~2]{SchifflerThomas2009}. We follow the convention to include the boundary edges, simply because it make our arguments slightly easier.

\paragraph{The Conway--Coxeter triangulation.}
Conway and Coxeter showed that any $A_n$-frieze comes from a
triangulation of an $n+3$-gon. More precisely, all the ``edges''
(which we call arcs) in the triangulation are labelled by $1$,
and the other labels appearing in the $A_n$-frieze are determined
by successive applications of the Ptolemy rule, described by the picture below (for a more visual explanation, watch  \cite{HaranTabachnikovFriezes}):

\begin{center}
\begin{tikzpicture}[scale=.9, line cap=round, line join=round]
  \coordinate (A) at (0,1.3);
  \coordinate (B) at (2.1,1.3);
  \coordinate (C) at (1.65,0);
  \coordinate (D) at (-.4,.15);

  \draw (A) -- node[above] {$b$} (B)
            -- node[right] {$d$} (C)
            -- node[below] {$c$} (D)
            -- node[left] {$a$} (A);

  \draw (D) -- node[pos=.25,above] {$x$} (B);
  \draw (A) -- node[pos=.25,right] {$y$} (C);

  \node at (4.4,.7) {$xy=ad+bc$.};
\end{tikzpicture}
\end{center}

\paragraph{The plafond triangulation.}
Fontaine and Plamondon showed that any $D_n$-frieze comes from
a plafond triangulation\cite[Proposition~3.2]{FontainePlamondon2016}. We make the convention that the plafond triangulation starts with an $n$-gon with a puncture and triangulates from the outside in. Start with the edges of the outer $n$-gon\footnote{\cite{FontainePlamondon2016} do not include the boundary edges, because only the inner edges correspond to cluster variables.} and label them with $1$. Successively repeat
the following: add one arc \footnote{`edge,' but we use the term arc, since it may be deformed by an isotopy fixing the marked points, see \cite[Section~3.1]{FontainePlamondon2016} and
\cite[Definition~2.1]{BaurMarsh2009}} that completes a triangle
(i.e.\ the arc is one side of the triangle, the other two sides
are already in the triangulation), and does not cross any
previous arcs. Continue doing this until only a connected $s$-gon around the
puncture is left, where for the moment, we assume $s\ge2$. 
Outside of the inner $s$-gon, the outer $n$-gon has been
triangulated. Note that the digon case $(s=2)$ is possible! Label all the arcs in the triangulation so far
by $1$.

Then we add $s$ spokes, connecting the puncture to each vertex of the inner $s$-gon. All spokes are labeled by the same positive integer
$d$, where we require that $d\mid s$.

\begin{center}
\begin{tikzpicture}[scale=.95, line cap=round, line join=round]
  \coordinate (A) at (0,1.6);
  \coordinate (B) at (1.45,.7);
  \coordinate (C) at (1.45,-.65);
  \coordinate (D) at (0,-1.55);
  \coordinate (E) at (-1.45,-.65);
  \coordinate (F) at (-1.45,.7);
  \coordinate (P) at (0,-.45);

  \draw (A) -- node[above right] {$1$} (B)
            -- node[right] {$1$} (C)
            -- node[below right] {$1$} (D)
            -- node[below left] {$1$} (E)
            -- node[left] {$1$} (F)
            -- node[above left] {$1$} (A);

  \draw (F) -- node[above] {$1$} (B);
  \draw (F) .. controls (-.25,.65) and (1.05,.15) ..
    node[pos=.55,above] {$1$} (C);

  \draw[dashed] (P) -- node[midway,above] {$2$} (F);
  \draw[dashed] (P) -- node[midway,above] {$2$} (C);
  \draw[dashed] (P) -- node[midway,right] {$2$} (D);
  \draw[dashed] (P) -- node[midway,above] {$2$} (E);
  \fill (P) circle (1.7pt);

  \node[below] at (0,-1.85) {example};

  \node[anchor=west] at (4.0,-1.55) {arcs with label $1$};
  \draw (2.2,-1.55) -- (3.8,-1.55);

  \node[anchor=west] at (4.0,-2.0) {arcs with label $d=2$ (they are the spokes)};
  \draw[dashed] (2.2,-2.0) -- (3.8,-2.0);
\end{tikzpicture}
\end{center}

The  plafond triangulation concerning
the case $s=1$ is a bit different. We start with an $n$-gon, and pick one vertex. We draw a loop based at the vertex that encloses the puncture, then triangulate the complement. Then insert a spoke connecting the chosen vertex and the puncture (inside the loop). Finally, delete the loop from before and replace it by a notched copy of the spoke, see \cite[Section~3.3]{FontainePlamondon2016} and
\cite[Definition~6.6(i)]{BaurMarsh2009}. The labels of the arcs so far (including the plain and notched spoke) are $1$.


\textbf{$D_n$-friezes from plafond triangulations.}
The $D_n$-frieze consists of the labels of all \textit{admissible tagged} arcs connecting any pair of vertices of the punctured $n$-gon: These are all arcs except for self-intersecting paths, the boundary arcs, those paths that cut out unpunctured $1$-gons or digons, arcs whose interiors cross the boundary or marked points, or loops that enclose punctured $1$-gons. Arcs between the puncture and a boundary point are \textit{radii}. To fill out a $D_n$-frieze, we make the convention to produce two versions of each radius; we have plain radii and notched
radii \footnote{The tags are at the puncture and measure compatibility of arcs, which we do not discuss here. But see also \cite[Section~3.1]{FontainePlamondon2016}.}). 

The values of the arcs are then
determined by the exchange rules: the Ptolemy rule, as well as the clock-hand
rule and the candle-flame rule. We recall their diagrams, and the fact that the center dot in the clock-hand rule and the candle-flame rule is the puncture.

\begin{center}
\exchangerules
\end{center}

\begin{theorem}\cite[Theorem~3.1, Proposition~3.2, and proof of Theorem~3.9]{FontainePlamondon2016}
Successive applications of the exchange rules to any plafond triangulation gives rise to a $D_n$-frieze. Conversely, any $D_n$-frieze corresponds to a unique plafond triangulation.
\end{theorem}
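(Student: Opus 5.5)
This theorem is cited from Fontaine--Plamondon. I would not reprove it from scratch; instead I would assemble it from three inputs: the cluster-algebra interpretation of friezes, the surface model of type $D_n$, and positivity and integrality results.

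\textbf{Forward direction.} Recall that a positive integral $D_n$-frieze is the same as a ring homomorphism from the type $D_n$ cluster algebra (trivial coefficients) to $\mathbb Z$ that sends every cluster variable to a positive integer. The frieze entries $x_{i,j}$ are the images of the cluster variables along the Auslander--Reiten quiver, and the relations \eqref{eq:frieze} are exactly the mesh exchange relations. By Schiffler and Fomin--Shapiro--Thurston, the cluster variables of type $D_n$ correspond bijectively to admissible tagged arcs in the once-punctured $n$-gon. Their exchange relations are precisely the Ptolemy, clock-hand and candle-flame rules.

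Given a plafond triangulation, I first complete it to a tagged triangulation. When $s\geq 2$, the $s$ spokes are all plain. When $s=1$, the triangulation contains the plain and notched spoke at a single vertex. Prescribing the labels ($1$ on non-spokes, $d$ on spokes) on this cluster defines a homomorphism by the Laurent phenomenon, and the exchange rules then compute every other arc. Positivity follows because the Laurent polynomials have positive coefficients. The labels are positive, so the mesh relations hold and give a frieze of positive rationals.

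\textbf{Integrality.} This is the step I expect to be the main obstacle, since Laurent expansions in $d$ have denominators that are powers of $d$. Non-radius arcs and arcs outside the inner $s$-gon only involve the label-$1$ arcs, so they are clearly integral. For radii, I would compute directly. Around the inner $s$-gon the spokes are $d$ and the inner edges are $1$. Applying the clock-hand rule at a spoke gives the notched spoke value $s/d$. Here one uses that the product over a full turn of the ratios equals $s/d^2\cdot d$, which is essentially Baur--Marsh's Lemma 5.5 that notched equals $s$ times plain in $U_1$. So the condition $d\mid s$ is exactly what makes the notched spokes integral. Remaining radii are obtained from spokes by Ptolemy steps with label-$1$ arcs, so they are $\mathbb Z$-combinations of $d$ and $s/d$. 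All other arcs then follow by Ptolemy and clock-hand steps, whose right-hand sides are integer polynomials. One must order the mutations so that each division is exact; I would use the cut-open $A$-type picture described earlier to certify this.

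\textbf{Converse and uniqueness.} Given a positive integral $D_n$-frieze, I would find an arc labeled $1$ (a minimal entry argument as in Conway--Coxeter: a local minimum in a diamond relation forces an entry $1$). I would then argue that the label-$1$ arcs are pairwise compatible; this follows from exchange relations, since crossing arcs labeled $1$ would force $xy=1=$ sum of positives $\geq 2$. Finally I would peel off ears from the outside by induction on $n$, reducing to a smaller punctured polygon until only the inner $s$-gon remains. The frieze relation around the puncture then forces equal spoke labels $d$ with $d\mid s$. Uniqueness holds because a homomorphism is determined by its values on any cluster, and the plafond data is recovered canonically as the set of label-$1$ arcs.
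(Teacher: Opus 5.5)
\textbf{The converse starts from a false claim.} You assert that every positive integral $D_n$-frieze has an entry $1$. This is false. Take $n=s=4$ and $d=2$: spokes at all four vertices, no outer arcs. Then every plain and notched radius equals $2$, $x_{4,j}=2$ and $x_{3,j}=3$. The frieze relations hold, since $2\cdot 2-3=1$ for rows $1,2,4$ and $3\cdot 3-2\cdot 2\cdot 2=1$ for row $3$. More generally, $s=n$ with $1<d<n$, $d\mid n$ always gives a frieze without $1$'s.

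So your peeling induction has no starting point exactly in the case that carries the condition $d\mid s$. Your sentence ``the frieze relation around the puncture then forces equal spoke labels $d$ with $d\mid s$'' is the actual content of the converse, and you give no argument for it. It can be supplied as follows.
\begin{itemize}
\item Peel only along \emph{non-radius} arcs labeled $1$. The unpunctured side of such an arc carries a Conway--Coxeter frieze, so it yields a label-$1$ ear.
\item Once you reach a punctured $k$-gon with $k\ge 3$ and no interior non-radius arc labeled $1$, sum $r_jm_{j-1,j+1}=r_{j-1}+r_{j+1}$ over $j\in\mathbb Z/k$. This gives $\sum_j r_j(m_{j-1,j+1}-2)=0$ with every $m_{j-1,j+1}\ge 2$, so all $m_{j-1,j+1}=2$.
\item Then $2r_j=r_{j-1}+r_{j+1}$ around a cycle, so all $r_j$ equal some $d$. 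The clock-hand rule gives $r^{\bowtie}=k/d$, and integrality forces $d\mid k$.
\item The digon $k=2$ must be treated separately. There $(r_1,r_2)=(1,2)$ occurs (the $s=1$ configuration), so equal spoke labels are not forced.
\end{itemize}
For the same reason, uniqueness cannot be read off as ``the set of label-$1$ arcs'' when $d>1$, because then the spokes are not labeled $1$. Instead, recover the outer triangulation from the non-radius label-$1$ arcs and $d$ from the common spoke value. That step uses your compatibility claim. For two arcs crossing twice (e.g.\ the long arcs at two disjoint boundary edges), a single exchange relation does not give $xy\ge 2$; you need a skein relation or a multiplication formula.

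\textbf{The forward integrality step is also misjustified.} Non-radius arcs that cross spokes do not ``only involve label-$1$ arcs''. In the plafond cluster, the arc from $u_1$ to $u_3$ enclosing $u_2$ equals $(x_{u_1}+x_{u_3})/x_{u_2}$. It is integral at $x_{u_i}=d$ only because it is homogeneous of degree $0$ in the spoke variables. The cut-open $A$-type picture you defer to applies literally only when $d=1$.

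A clean fix is the grading with plain radii in degree $1$, notched radii in degree $-1$, and all other arcs in degree $0$. Every Ptolemy, clock-hand and candle-flame relation is homogeneous for it. Hence $\gamma\mapsto d^{\deg\gamma}U_1(\gamma)$ again satisfies all exchange relations, agrees with $\mathcal F$ on the plafond cluster, and therefore equals $\mathcal F$ everywhere. This gives:
\begin{itemize}
\item non-radius labels equal those of $U_1$;
\item plain radii are $d\,U_1(r)$;
\item notched radii are $U_1(r^{\bowtie})/d=(s/d)\,U_1(r)$.
\end{itemize}
Integrality of $U_1$ follows from cutting along a label-$1$ spoke (Conway--Coxeter), the clock-hand rule, and $U_1(r^{\bowtie})=s\,U_1(r)$. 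Then $d\mid s$ gives integrality of $\mathcal F$.
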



For the convenience of the reader, we spell out what this means for the entries $x_{i,j}$ in the $D_n$-frieze diagram:
\begin{center}
\friezearray
\end{center}

Label the vertices on the boundary of the $n$-gon by $v_j$, clockwise.
For the moment, assume $i \geq 3$. The subscripts in $x_{i,j}$ mean ``$i-2$ steps down clockwise'' and ``connect towards the vertex $v_j$'': $x_{i,j}$ denotes the arc that connects vertex $v_{j+i-2}$ to vertex $v_j$\footnote{Note this convention differs from that of Baur and Marsh's, who call $m_{ij}$ the label of the arc that connects vertices $i$ and $j$\cite[Definition 2.14]{BaurMarsh2009}}, clockwise around the puncture. For example, $x_{3,j}$ is the long arc between adjacent vertices $v_{j+1}$ and $v_j$.
As for the $x_{i,j}$ with $i\in\{1,2\}$, these denote radii connecting the puncture to vertex $v_j$. The convention is that $x_{i,j}$ is a plain radius if $i+j$ is even and a notched radius if $i+j$ is odd (remember this is for $i\in\{1,2\}$). Explicitly, we have $$(x_{1,j},x_{2,j})=\begin{cases}(r_j,r_j^{\bowtie}), & j \text{odd},\\
 (r_j^{\bowtie},r_j), & j \text{even}. \end{cases}$$

\begin{observation}\label{obs:tagexchange}
If the $s$ spokes in the plafond triangulation have labels $d$, then their notched counterparts all have label $\frac{s}{d}$. More generally, if a radius has label $r_v$, then its notched counterpart has label $r_v^{\bowtie}=\frac{s}{d^2}r_v$.
\end{observation}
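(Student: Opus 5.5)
Two steps: first the spokes, then a general radius.

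\emph{Step 1 (spokes).} Let the spokes $\sigma_1,\dots,\sigma_s$ end at the vertices $w_1,\dots,w_s$ of the inner $s$-gon, in cyclic order. The sides of the $s$-gon have label $1$. For $s\geq 3$ (the digon is handled the same way with the two parallel arcs), flip the spoke $\sigma_i$ inside the punctured digon bounded by the inner sides $w_{i-1}w_i$ and $w_iw_{i+1}$. The flip of a plain spoke in such a digon is the notched spoke $\sigma_i^{\bowtie}$. The exchange relation is the degenerate Ptolemy/clock-hand relation used by Fontaine and Plamondon (cf.\ \cite[Lemma~5.5]{BaurMarsh2009}). It should read $d\,r_{w_i}^{\bowtie}=\ell$, where $\ell$ is the label of the arc from $w_{i-1}$ to $w_{i+1}$ through the triangles of the $s$-gon. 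That arc is obtained by Ptolemy from the remaining spokes and the sides labelled $1$.

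The cleaner route avoids this computation. I would use the known invariant of a punctured polygon: for each boundary vertex $v$,
\[
r_v\, r_v^{\bowtie}=\lambda\qquad\text{with } \lambda \text{ independent of } v.
\]
This is the constant appearing in \cite[Lemma~5.5]{BaurMarsh2009}. It follows from the clock-hand rule together with Ptolemy applied to the quadrilateral formed by two radii and the arcs $a,b$.

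To evaluate $\lambda$ at a spoke, use the candle-flame and clock-hand rules around the inner polygon. Going once around the puncture gives a telescoping relation in which the $s$ unit sides each contribute one factor of $1$ and each spoke contributes a factor $d$. The outcome should be $\lambda=s$, so $r^{\bowtie}_{w_i}=s/d$. Equivalently, one checks the case $d=1$, where each notched spoke is $s$ (a standard computation), and invokes the comparison \cite[Lemma~4.8]{FontainePlamondon2016} for general $d$.

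\emph{Step 2 (general radius).} The claim $r_v^{\bowtie}=\frac{s}{d^2}r_v$ is equivalent to the proportionality
\[
\frac{r_v^{\bowtie}}{r_v}=\mu \quad\text{for all boundary vertices } v,
\]
where Step~1 gives $\mu=\frac{s/d}{d}$ at the spokes.

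To propagate this, take adjacent boundary vertices $v,w$ with radii $r_v,r_w$, let $a$ be the arc between them on one side of the puncture, and $a'$ the arc on the other side. By the clock-hand rule,
\[
r_v r_w^{\bowtie}=a+a' = r_v^{\bowtie}r_w,
\]
where the second equality is the clock-hand rule with the tags swapped, using the same $a$ and $a'$. Hence $r_v^{\bowtie}/r_v=r_w^{\bowtie}/r_w$.

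Since this holds for every adjacent pair, the ratio is constant around the polygon. It therefore equals its value at a spoke, namely $s/d^2$. The case $s=1$ is covered directly: the plain and notched spoke both have label $1$ and $d=1=s$.

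\emph{Main obstacle.} The delicate step is the spoke computation in Step~1, showing that the notched spoke is exactly $s/d$. In particular the digon case $s=2$ and the tagged conventions need care. I would verify it by first treating $d=1$ through the explicit flip sequence rotating around the puncture, and then scaling via \cite[Lemma~4.8]{FontainePlamondon2016}.
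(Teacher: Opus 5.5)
\textbf{There is a genuine gap in Step~1.} Your Step~2 is correct and is the paper's argument: the two clock-hand relations $r_vr_w^{\bowtie}=a+a'=r_v^{\bowtie}r_w$ make $r_v^{\bowtie}/r_v$ independent of $v$. The case $s=1$ is also fine. What is missing is the value $r^{\bowtie}=s/d$ at a spoke. That is the first claim of the statement, and none of your three routes establishes it.

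\emph{First route.} It misidentifies the exchange. For $s\ge3$ the spoke $\sigma_i$ lies in the quadrilateral $p,w_{i-1},w_i,w_{i+1}$, not in a punctured digon. Flipping it is an ordinary Ptolemy move, $d\,x=d+d$, which produces the arc $w_{i-1}w_{i+1}$ cutting off $w_i$, with label $2$. Notched radii enter only through the clock-hand rule. That rule pairs a plain radius at one vertex with a notched radius at a \emph{different} vertex, and equates the product to the sum of the \emph{two} arcs between those vertices.

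\emph{Second route.} It rests on a false invariant: $r_vr_v^{\bowtie}$ is not independent of $v$. Combined with the ratio invariant of your own Step~2, it would force all plain radii to be equal. Concretely, take $n=4$, $d=1$, and the inner digon formed by the two arcs $v_1v_3$, so $s=2$ with spokes at $v_1,v_3$. Ptolemy gives $r_2=2$, and one finds $r_1r_1^{\bowtie}=2$ but $r_2r_2^{\bowtie}=2\cdot4=8$. Moreover $\lambda=s$ is only asserted (``should be''), and the proposed telescoping with ``a factor $d$ per spoke'' is not what actually happens.

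\emph{Third route.} It defers to an unstated ``standard computation'' for $d=1$. It then cites \cite[Lemma~4.8]{FontainePlamondon2016}, which concerns plain radii. Reaching notched labels from it would additionally need the comparison of boundary arcs and the clock-hand rule.

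\emph{The missing computation.} It is short, and the $d$'s cancel rather than accumulate.
\begin{enumerate}
\item Label the inner vertices $u_1,\dots,u_s$ clockwise. Let $a_i$ be the label of the arc from $u_1$ to $u_{i+1}$ passing clockwise around the puncture, so $a_1=1$.
\item Apply Ptolemy in the quadrilateral $p,u_1,u_{i+1},u_{i+2}$, whose diagonals are the spoke at $u_{i+1}$ and the arc labelled $a_{i+1}$. This gives $d\,a_{i+1}=d\,a_i+d$, hence $a_i=i$.
\item So the two arcs joining neighbouring inner vertices $u_k,u_{k+1}$ have labels $1$ and $s-1$. The clock-hand rule with the plain spoke at $u_k$ gives $d\,r_{u_{k+1}}^{\bowtie}=1+(s-1)=s$.
\item For $s=2$, both sides of the inner digon have label $1$, so $d\,r^{\bowtie}=2$ directly.
\end{enumerate}
With this in place of your Step~1, your Step~2 completes the proof.
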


\begin{proof}
    For any pair of two distinct boundary vertices $v$ and $w$, we have two clock-hand rules, equating both $r_vr_w^{\bowtie}$ and $r_v^{\bowtie}r_w$ to a sum $\Sigma$ of labels of two complementary arcs: $$r_vr_w^{\bowtie}=r_v^{\bowtie}r_w=\Sigma.$$ Since all labels are positive, this implies $$\frac{r_v^{\bowtie}}{r_v}=\frac{\Sigma}{r_vr_w}=\frac{r_w^{\bowtie}}{r_w}:$$
    The fraction $\frac{r_v^{\bowtie}}{r_v}$ is thus independent of $v$. We will evaluate this fraction at a spoke. For the moment, assume $s\geq3$.  Consider the inner $s$-gon used in the plafond  triangulation, and label the vertices $u_1,\cdots,u_s$ clockwise. Denote by $a_i$ the label of the arc from $u_1$ to $u_{i+1}$ for $1\leq i\leq s-1$ going around the puncture clockwise. Note that $a_1=1$ by construction of the plafond triangulation. More generally, $a_i=i$ for $1\leq i\leq s-1$. Indeed, consider the quadritlateral with vertices $p,u_1,u_{i+1},u_{i+2}$. Then Ptolemy gives us $da_{i+1}=da_i+d$, since the spokes have label $d$, and we can induct. We can perform the analogous argument for the arc between $u_j$ and $u_{i+j}$ and show their label is $i$. In particular, the two arcs connecting two neighboring vertices $u_k$ and $u_{k+1}$ have labels $1$ and $s-1$. Applying the clock-hand rule, we get that $dr_u^{\bowtie}=1+(s-1)=s$ for any $u=u_j$, implying $r_u^{\bowtie}=\frac{s}{d}=\frac{s}{d^2}r_u$ since $r_u=d$, as desired. To treat the case $s=2$, apply the clock-hand rule to the digon, whose sides have label $1$, giving $dr_u^{\bowtie}=2,$ which is the same conclusion. When $s=1$, the spoke labels are $1$, and $d=1$, so the result follows.
\end{proof}

\section{Bounds on products of entries in distinct positions in \texorpdfstring{$A_m$}{Am}-chains}
In this section, we recall some results of Cheah and de Saint Germain who bound entries $a_k$ in $A_m$-chains by Virahanka-Fibonacci numbers: $a_k\leq F_{m+2}$, and generalize their result to bound products $a_k\cdot a_\ell$ of entries with $k\neq \ell$:  $a_k\cdot a_l\leq F_{m+2}F{m+1}$.

\begin{definition}
Let $m\geq 0$ be an integer. An \textrm{$A_m$-chain} is a sequence of positive integers $1=a_0,a_1,a_2,\ldots,a_m,a_{m+1}=1$ so that $a_i\mid a_{i-1}+a_{i+1}$ for all $1\leq i \leq m$.

We denote an $A_m$-chain more compactly by $(a_1,\ldots,a_m)$. (For $m=0$, this is the empty sequence.)
\end{definition}
By \cite[Lemma~2.4]{CheahSaintGermain2025}, cf. \cite{ConwayCoxeter1973a,ConwayCoxeter1973b}{ConwayCoxeter1973a}, $A_m$-chains correspond to diagonals of $A_m$-friezes.

Cheah and de Saint Germain proved the following\cite[Proposition 2.5(1)]{CheahSaintGermain2025}:
\begin{proposition}[Cheah--de Saint Germain lemma]\label{prop:cheadesaingermain} The entries $a_i$ of an $A_m$-chain satisfy the following inequality:
\begin{equation}\label{eq:single-bound}
 a_i\leq F_{m+2}.
\end{equation}
\end{proposition}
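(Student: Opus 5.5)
The plan is to prove \eqref{eq:single-bound} by induction on $m$, following the idea sketched in the footnote of the introduction. For $m=0$ there is nothing to prove. For $m=1$ the chain is $(a_1)$ with $a_1\mid 2$, so $a_1\le 2=F_3$. For the inductive step, fix an $A_m$-chain $(a_1,\ldots,a_m)$ with $m\ge 1$. The aim is to find an index $r$ whose removal leaves an $A_{m-1}$-chain, and such that $a_r\le a_{r-1}+a_{r+1}$.

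The key structural fact is that some interior entry equals $1$ or equals the sum of its neighbours. Write $c_i=(a_{i-1}+a_{i+1})/a_i\in\mathbb Z_{>0}$. I would take $r$ to be an index where $a_r$ is maximal among $a_1,\dots,a_m$, choosing the rightmost maximal entry.
\begin{itemize}
\item If $a_r=1$, then all entries equal $1$. The condition at $i=r$ gives $c_r=2$, and the bound $a_i=1\le F_{m+2}$ is trivial.
\item Otherwise $a_r>1$. Maximality gives $a_{r-1}+a_{r+1}\le 2a_r$, with strict inequality because $a_{r+1}<a_r$ (either $r$ is rightmost maximal, or $a_{r+1}=a_{m+1}=1$). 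Hence $c_r=1$, that is, $a_r=a_{r-1}+a_{r+1}$.
\end{itemize}

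Next I check that deleting $a_r$ yields an $A_{m-1}$-chain. At index $r-1$ the new neighbours are $a_{r-2}$ and $a_{r+1}$, and
\[
a_{r-2}+a_{r+1}=(a_{r-2}+a_r)-a_{r-1}=(c_{r-1}-1)a_{r-1},
\]
so $a_{r-1}$ still divides the sum of its new neighbours. Symmetrically, at $r+1$ we have $a_{r-1}+a_{r+2}=(c_{r+1}-1)a_{r+1}$. These quotients must be positive, and since all entries are positive integers, $c_{r\pm1}-1\ge1$ holds automatically when the index is interior. Boundary indices $r\pm1\in\{0,m+1\}$ impose no condition. So the shortened sequence, with endpoints $1$, is an $A_{m-1}$-chain.

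By induction every entry other than $a_r$ is at most $F_{m+1}$, which is at most $F_{m+2}$. For $a_r$, the induction bounds only give $a_{r-1}+a_{r+1}\le 2F_{m+1}$, which is too weak. This is the expected obstacle. To overcome it, I would strengthen the induction hypothesis to: for any two adjacent entries $a_i,a_{i+1}$ (indices $0$ to $m+1$) of an $A_m$-chain, $a_i+a_{i+1}\le F_{m+3}$. This hypothesis implies the single bound via $a_r=a_{r-1}+a_{r+1}$, once we know $a_{r-1}$ and $a_{r+1}$ are adjacent in the shortened $A_{m-1}$-chain: the strengthened hypothesis for $m-1$ gives $a_r\le F_{m+2}$.

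It remains to propagate the strengthened statement. A pair of adjacent entries in the original chain either survives as an adjacent pair in the shortened chain, giving sum at most $F_{m+2}$, or involves $a_r$. In the latter case,
\[
a_r+a_{r+1}=a_{r-1}+2a_{r+1}\le (a_{r-1}+a_{r+1})+a_{r+1}\le F_{m+2}+F_{m+1}=F_{m+3},
\]
using the pair bound for $m-1$ together with the single bound $a_{r+1}\le F_{m+1}$, which the pair bound for $m-1$ implies. The pair $(a_{r-1},a_r)$ is handled the same way.

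The base cases are checked directly. For $m=0$ the pair sum is $2=F_3$. For $m=1$ it is $a_1+1\le 3=F_4$. The all-ones case is trivial. This completes the induction.
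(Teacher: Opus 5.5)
Your proof is correct, and it is more self-contained than what the paper does. The paper gives no proof of its own. It cites \cite[Proposition~2.5(1)]{CheahSaintGermain2025} and sketches the reduction via Cheah--de Saint Germain's ``right index'' $r$, which comes from their Lemma~2.6, is available only when $a_m\neq1$, and is replaced by truncation otherwise. The sketch then says ``proceeding by induction''.

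Your route differs in two ways.
\begin{itemize}
\item You take $r$ to be the rightmost maximal entry. This gives a two-line proof that $a_r=a_{r-1}+a_{r+1}$, and it replaces the case split on $a_m$ by the trivial all-ones case.
\item You notice that the naive induction only yields $a_r\le 2F_{m+1}$. You repair it by carrying along the adjacent-sum bound $a_i+a_{i+1}\le F_{m+3}$, which closes because $a_{r-1}$ and $a_{r+1}$ become neighbours in the shortened chain.
\end{itemize}
The second point is the additive analogue of the paper's own proof of Lemma~\ref{lem:pair}. There, $a_{r+1}a_r=a_{r+1}a_{r-1}+a_{r+1}^2$ plays the role of your $a_r+a_{r+1}=(a_{r-1}+a_{r+1})+a_{r+1}$. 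So your argument supplies exactly the ingredient the paper's sketch glosses over, at the cost of proving a slightly stronger statement.

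One bookkeeping point needs fixing. The bound $a_{r+1}\le F_{m+1}$ is the single bound at level $m-1$. Your own argument derives it from the pair bound at level $m-2$, via the maximal entry of the shortened chain. It does not follow from the pair bound at level $m-1$: by the obvious argument, that bound alone only gives $a\le\tfrac23F_{m+2}$, which exceeds $F_{m+1}$ for large $m$. State the induction hypothesis as the conjunction of the single bound and the adjacent-pair bound, or use strong induction. With that phrasing the proof is complete.
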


The crucial idea of Cheah and de Saint Germain produces an $A_{m-1}$-chain as follows:
\begin{enumerate}[label=\arabic*)]
\item If $a_m=1$, simply truncate the end and consider $(a_1,\ldots,a_{m-1})$.
\item If $a_m\ne1$, they show \cite[Lemma 2.6]{CheahSaintGermain2025} that for some index $r\in\{1,\ldots,m\}$
\begin{equation}\label{eq:iisspecial}
 a_r=a_{r-1}+a_{r+1}.
\end{equation}
(The letter $r$ is chosen because it is the \textbf{r}ight index.)
If we delete $a_r$ from the above $A_m$-chain, we get an $A_{m-1}$-chain. 
\end{enumerate}

Now note that in an $A_m$-chain, $a_k\cdot a_\ell\le F_{m+1}\cdot F_{m+2}$ as long as neither $k$ nor $\ell$ are $=r$ in the second scenario above (the first scenario is fine), by applying the bound \eqref{eq:single-bound}\ to the $A_m$-chain and to the $A_{m-1}$-chain (it becomes $\le F_{m+1}$). This holds even if $k$ or $\ell$ are allowed to be $=r$: 

\begin{lemma}[Cheah--de Saint Germain lemma for pairs ]\label{lem:pair}
For any $A_m$-chain $(a_1,\ldots,a_m)$ and $0\le k<\ell\le m+1$, we have $a_k\cdot a_\ell\le F_{m+1}F_{m+2}$.
\end{lemma}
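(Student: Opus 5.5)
Proof by induction on $m$. For $m=0$ the only pair is $(k,\ell)=(0,1)$, and $a_0a_1=1=F_1F_2$. For $m=1$ the chain is $1,a_1,1$ with $a_1\mid 2$, so $a_1\le 2$ and every product is at most $2=F_2F_3$. Now let $m\ge2$ and assume the statement for $A_{m-1}$-chains.

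\textbf{Case $a_m=1$.} Truncating the end gives the $A_{m-1}$-chain $(a_1,\ldots,a_{m-1})$. Indices $k,\ell\le m-1$ keep their values, and the shared endpoint $a_m=a_{m+1}=1$ equals the new endpoint. So every product $a_ka_\ell$ with $k<\ell$ is either a product in the shorter chain or $a_k\cdot 1$. The induction hypothesis gives $F_mF_{m+1}$ for the first kind, and \eqref{eq:single-bound} gives $a_k\le F_{m+2}\le F_{m+1}F_{m+2}$ for the second. Both bounds suffice.

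\textbf{Case $a_m\ne1$.} Take the index $r$ from \eqref{eq:iisspecial}. Deleting $a_r$ gives an $A_{m-1}$-chain $(b_0,\ldots,b_m)$ whose entries are $a_0,\ldots,a_{r-1},a_{r+1},\ldots,a_{m+1}$. Every $b_i$ is at most $F_{m+1}$ by \eqref{eq:single-bound}, and every product of two distinct $b$'s is at most $F_mF_{m+1}$ by the induction hypothesis.

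If neither $k$ nor $\ell$ equals $r$, both entries survive in the shorter chain. Then $a_ka_\ell\le F_mF_{m+1}\le F_{m+1}F_{m+2}$.

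Otherwise, say $k=r$ and $\ell\ne r$ (the case $\ell=r$ is symmetric). Write
\[
a_ra_\ell=a_{r-1}a_\ell+a_{r+1}a_\ell .
\]
Each of $a_{r-1},a_{r+1},a_\ell$ is an entry of the $b$-chain. This is where the main difficulty lies: $a_\ell$ may coincide with $a_{r-1}$ or $a_{r+1}$.

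\textbf{Subcase $\ell\notin\{r-1,r+1\}$.} The three positions are distinct in the $b$-chain, so each summand is a product of distinct entries. The induction hypothesis bounds each summand by $F_mF_{m+1}$, giving $2F_mF_{m+1}$. This is not good enough, since $2F_m$ may exceed $F_{m+2}$. So I would split asymmetrically and use the single bound on one factor:
\[
a_ra_\ell\le a_{r-1}a_\ell+a_{r+1}a_\ell\le F_mF_{m+1}+ a_{r+1}a_\ell .
\]
It then remains to show $a_{r+1}a_\ell\le F_{m+1}^2$. For this I plan to strengthen the induction hypothesis. The strengthened statement says that for $k<\ell$ in an $A_m$-chain, $a_ka_\ell\le F_{m+1}F_{m+2}$, and moreover each individual factor is at most $F_{m+2}$. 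One of the two factors should even be at most $F_{m+1}$, unless the pair is adjacent in some controlled way.

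\textbf{Simpler alternative.} I would try this first. Apply the known single bound inside the smaller chain obtained by deleting $a_r$ \emph{and} then further reducing. A cleaner route is to observe that $a_{r-1}$ and $a_{r+1}$ are adjacent in the $b$-chain, which must itself be a valid chain. Adjacent entries $x,y$ in an $A_{m-1}$-chain satisfy $x+y\le F_{m+2}$. This is the Cheah--de Saint Germain-type bound for sums of neighbours, since $x+y$ is the entry inserted between them in an $A_m$-chain. So I would prove the auxiliary claim that $a_r=a_{r-1}+a_{r+1}\le F_{m+2}$ together with $a_\ell\le F_{m+1}$ whenever $\ell\ne r$. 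The latter is immediate because $a_\ell$ survives in the $b$-chain. This settles the subcase and gives $a_ra_\ell\le F_{m+2}F_{m+1}$ directly.

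\textbf{Subcase $\ell\in\{r-1,r+1\}$.} The same argument applies: $a_r\le F_{m+2}$ by \eqref{eq:single-bound}, and $a_\ell\le F_{m+1}$ since $a_\ell$ is an entry of the $A_{m-1}$-chain.

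So the proof reduces to the observation in the opening paragraph of this section, with induction needed only for the base structure. The only real point is to confirm that the deleted index $r$ is the only index whose entry exceeds the $F_{m+1}$ bound. This holds because every other entry appears in the $b$-chain.
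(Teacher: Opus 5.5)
Your final argument is correct, but it takes a different and shorter route than the paper. The paper keeps the pair statement itself as its induction hypothesis. It writes $a_ka_r=a_ka_{r-1}+a_ka_{r+1}$ and bounds each summand by $F_mF_{m+1}$ using Lemma~\ref{lem:pair} for the shortened $A_{m-1}$-chain. It then treats $k=r\pm1$ separately via $a_{r+1}a_r=a_{r+1}a_{r-1}+a_{r+1}^2\le F_mF_{m+1}+F_{m+1}^2$.

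You instead note two things. Every entry other than $a_r$, including the endpoints, survives in the $A_{m-1}$-chain and is therefore at most $F_{m+1}$. And $a_r\le F_{m+2}$ by \eqref{eq:single-bound} applied to the original chain. Since $k\ne\ell$, at most one factor is $a_r$, so $a_ka_\ell\le F_{m+2}F_{m+1}$ follows at once.

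This uses only the single bound at levels $m$ and $m-1$ plus the deletion step. So no induction on the pair statement is needed at all: in the case $a_m=1$, and when $r\notin\{k,\ell\}$, you get $F_{m+1}^2$ directly. It also gives a stronger structural fact: at most one entry of an $A_m$-chain exceeds $F_{m+1}$, and iterating the deletion, the $j$-th largest entry is at most $F_{m+3-j}$. The paper's remark just before the lemma already contains these ingredients but does not apply them when one index equals $r$. Its inductive route stays within the pair statement, but it is longer and needs the separate case $k=r\pm1$.

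Two things to clean up. Your assertion that $2F_mF_{m+1}$ is ``not good enough'' is false, since $2F_m\le F_m+F_{m+1}=F_{m+2}$; that is exactly the paper's estimate. And the detour through sums of adjacent entries and a strengthened hypothesis is superfluous, because $a_r$ is itself an entry of the $A_m$-chain.
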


\begin{proof} Assume for now that $m\geq2$. If $a_m=1$, we are in case 1 above and can reduce the lemma to the $A_{m-1}$-chain case. If  $a_m\neq 1$, then in view of the remark before the lemma and without loss of generality, let $\ell=r$. Then $a_k\cdot a_r=a_ka_{r-1}+a_ka_{r+1}$ by (\ref{eq:iisspecial}). 
If we knew the claim held for $A_{m-1}$-chains, then we would be fine in the case $r-1\neq k\ne r+1$. Indeed, the claim for $A_{m-1}$ would give
us $a_ka_{r-1}\le F_mF_{m+1}$ and $a_ka_{r+1}\le F_mF_{m+1}$, so that
\[
 a_k\cdot a_r\le(F_m+F_m)F_{m+1}
 \le(F_m+F_{m+1})F_{m+1}=F_{m+1}F_{m+2},
\]
as desired.

To handle the excluded case, let $k=r+1$. We have
\[
\begin{aligned}
 a_{r+1}a_r
 &=\underbrace{a_{r+1}a_{r-1}}_{\substack{\text{both live in the shortened chain, so}\\
       \text{Lemma \ref{lem:pair} for $A_{m-1}$ would give}\\
       \text{us that this is }\le F_mF_{m+1}}}
   +\underbrace{a_{r+1}^2}_{\substack{a_{r+1}\text{ is also in the shortened}\\
       \text{chain, so by Proposition \ref{prop:cheadesaingermain},}\\
       \text{this is }\le F_{m+1}^2}}\\[3pt]
 &\le(F_m+F_{m+1})F_{m+1}=F_{m+1}F_{m+2}. 
\end{aligned}
\]

The case $k=r-1$ is similar.
We thus showed  that the claim for $A_{m-1}$ implies the claim for $A_m$, so we reduce to the claim for $A_1$ by induction, but the claim for $A_1$ and $A_0$ is automatic\footnote{In case it is not, note that when $m=1$, then $a_1|1+1$, so $a_1=1$ or $a_1=2$. Both are at most $F_2F_3=2$. When $m=0$, $a_0=a_1=1$, and all this says is that $a_0\cdot a_1\leq F_1F_2,$ or $1\cdot 1\leq 1\cdot 1$. }. 
\end{proof}


\begin{corollary}\label{cor:ones}
Suppose that of the $m$ entries $a_1,\cdots,a_m$ in an $A_m$-chain $(a_1,\ldots,a_m)$, at least $h$ entries\footnote{So we are explicitly excluding the boundary entries $a_0$ and $a_{m+1}$ from the discussion} are $=1$. Let $0\le k<\ell\le m+1$. We then have $a_k,a_\ell\le F_{m+2-h}$ and $a_k\cdot a_\ell\le F_{m+1-h}F_{m+2-h}$.
\end{corollary}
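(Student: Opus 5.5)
Since a $1$ in the interior of an $A_m$-chain splits it into two independent chains, deleting a $1$ from an $A_m$-chain should give a valid $A_{m-1}$-chain. So I plan to induct on $h$ and delete interior $1$'s one at a time. After all $h$ are removed, I apply Proposition \ref{prop:cheadesaingermain} and Lemma \ref{lem:pair} to the resulting $A_{m-h}$-chain.

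\emph{Key deletion step.} Suppose $a_i=1$ with $1\le i\le m$. The divisibility $a_i\mid a_{i-1}+a_{i+1}$ then holds trivially, so it is not what makes deletion work. The issue is whether the neighbours remain valid after removing $a_i$. Using the frieze interpretation, I would instead argue through the Conway--Coxeter model: $A_m$-chains correspond to diagonals of $A_m$-friezes (\cite[Lemma~2.4]{CheahSaintGermain2025}), and a $1$ on a diagonal is an arc of label $1$. Such an arc cuts the polygon into two smaller triangulated polygons. The diagonal entries on either side of the arc are then the diagonals of the two pieces, and these glue into the diagonal of the polygon in which the vertex corresponding to $a_i$ is removed along that arc.

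If this geometric argument becomes delicate, I would fall back on proving directly that $(a_1,\dots,\widehat{a_i},\dots,a_m)$ is an $A_{m-1}$-chain. The only conditions at risk are at $a_{i-1}$ and $a_{i+1}$. For $a_{i-1}$, the chain's recurrence $a_{i-2}+a_i=c_{i-1}a_{i-1}$ gives $a_{i-2}+1\equiv 0 \pmod{a_{i-1}}$, and I need $a_{i-1}\mid a_{i-2}+a_{i+1}$. The natural route is to show that a $1$ at position $i$ in a frieze diagonal forces $a_{i+1}\equiv 1\pmod{a_{i-1}}$, via the $SL_2$/continuant structure: consecutive diagonal entries satisfy $a_{i-1}a_{i+1}-a_i b=1$ for an adjacent entry $b$. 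The clean alternative is to split rather than delete: $(a_1,\dots,a_{i-1})$ and $(a_{i+1},\dots,a_m)$ are each chains, since the conditions at their ends use the boundary value $1=a_i$. The bounds of Proposition \ref{prop:cheadesaingermain} and Lemma \ref{lem:pair} then apply to each piece, with lengths $p+q=m-1$.

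I will pursue the splitting version, since it avoids the unclear deletion claim. The induction runs as follows.
\begin{itemize}
\item Split at all $h$ ones into chains of lengths $m_1,\dots,m_t$ with $\sum m_j=m-h$.
\item An entry in piece $j$ is at most $F_{m_j+2}\le F_{m-h+2}$.
\item For a product $a_ka_\ell$ with both entries in the same piece, Lemma \ref{lem:pair} gives at most $F_{m_j+1}F_{m_j+2}\le F_{m-h+1}F_{m-h+2}$.
\item If the entries lie in different pieces of lengths $p$ and $q$ with $p+q\le m-h$, the bound is $F_{p+2}F_{q+2}$. I need $F_{p+2}F_{q+2}\le F_{p+q+1}F_{p+q+2}$. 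This follows from $F_{q+2}\le F_{p+q+1}$ when $p\ge1$. If $p=0$, the entry is a $1$ and the bound is trivial; the same holds when an index is a boundary $a_0$, $a_{m+1}$ or one of the ones.
\end{itemize}

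The main obstacle is verifying that the split pieces are genuine chains, specifically that the endpoint condition uses $a_i=1$ correctly: $a_{i-1}\mid a_{i-2}+a_i$ is exactly the chain condition for the piece ending at $a_{i-1}$ with new boundary $1$. I expect this to be immediate from the definition, which makes splitting the right choice.
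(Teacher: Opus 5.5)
Your final splitting argument is correct and is essentially the paper's proof. Both cut at the ones into subchains of total length at most $m-h$, apply Proposition~\ref{prop:cheadesaingermain} and Lemma~\ref{lem:pair} within a piece, and for entries in different pieces use that one of the two pieces has length at most $m-h-1$; your inequality $F_{p+2}F_{q+2}\le F_{p+q+1}F_{p+q+2}$ is exactly that step.

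The deletion idea you set aside is actually false, not just unclear: $(2,1,2)$ is an $A_3$-chain, but $(2,2)$ is not an $A_2$-chain. The same example refutes your proposed congruence $a_{i+1}\equiv 1 \pmod{a_{i-1}}$. So splitting is necessary, not merely cleaner, and you should drop those opening paragraphs from the write-up.
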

\begin{proof} Without loss of generality, assume $h\leq m-1$.
We can cut the $A_m$-chain along each $a_i=1$ into at least $h+1$ subchains, each of length at most $m-h$. The first assertion follows from \cite[Proposition 2.5]{CheahSaintGermain2025}. As for the second assertion, if $a_k$ and $a_\ell$ lie in the same subchain, this follows from Lemma \ref{lem:pair}. If they lie in different subchains, then their lengths cannot both be $\ge m-h$, so without loss of generality, $a_k\le F_{m+1-h}$ by \cite[Proposition 2.5]{CheahSaintGermain2025} again. The corollary follows.
\end{proof}

\section{Bounds on \texorpdfstring{$D_n$}{Dn}-friezes coming from plafond triangulation with 1's on the spokes}\label{sec:unitary}

In this section, we assume that we are given a $D_n$-frieze with $n\geq4$ whose plafond triangulation has 1's on the $s$ spokes. As before, we label the vertices $v_1,\ldots,v_n$ clockwise and denote by $r_i$ the label of the radius connecting the puncture and $v_i$. Without loss of generality, assume the radius $r_1$ is a spoke.

For the moment, let $s\geq2$. We now cut the triangulation along $r_1$: double the vertex $v_1$, (call them $v_{n+1}$ and $v_1$), double the radius $r_1=1$ (call them $r_1$ and $r_{n+1}$), but keep the puncture $p$, which is now a(n outer) vertex. The Ptolemy relations around a vertex $v_j$ give
\begin{center}
\begin{minipage}{0.30\linewidth}\centering\radiiquadrilateral\end{minipage}\hfill
\begin{minipage}{0.67\linewidth}
\[
 r_j\times m_{j-1,j+1}=r_{j-1}\times1+r_{j+1}\times1.
\]
In particular, $r_j\mid r_{j-1}+r_{j+1}$ for $2\le j\le n$.
\end{minipage}
\end{center}
Thus, $(r_2,\ldots,r_n)$ is an $A_{n-1}$-chain with $r_i=1$ for at least $s-1$ of the $r_i$'s. 

For $s=1$, both the plain and notched radii $r_1$ and $r_1^{\bowtie}$ are part of the triangulation. We cut open $r_1$ as above. Now the notched $r_1^{\bowtie}$ comes from the loop around $p$ based at $v_1$, so that after cutting open, $r_1^{\bowtie}$ opens into an arc connecting $v_1$ and $v_{n+1}$ labeled $1$, cf. \cite[Remark~6.5, Definition~6.7, and Proof of Proposition~A.2]{BaurMarsh2009}. Consequently, the above argument with the Ptolemy relations applies also when $s=1$.

Thus from Corollary \ref{cor:ones} we obtain:
\begin{proposition}\label{prop:radii}
Choose $k<\ell\in\{1,\ldots,n+1\}$. Then $r_k,r_\ell\le F_{n+2-s}$\footnote{$(n-1)+2-(s-1)=n-s+2=n+2-s$.} and $r_kr_\ell\le F_{n+2-s}\cdot F_{n+1-s}$.
\end{proposition}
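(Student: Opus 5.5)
The plan is to apply Corollary \ref{cor:ones} to the $A_{n-1}$-chain $(r_2,\ldots,r_n)$ constructed just above. We take $m=n-1$, and we take $h=s-1$ for the number of interior entries equal to $1$. The boundary entries of this chain are $a_0=r_1=1$ and $a_m=a_n=r_{n+1}=1$. These are the two copies of the spoke obtained by cutting, and we index them so that $a_i=r_{i+1}$ for $0\le i\le n$.

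First we check the hypotheses. The divisibility relations $r_j\mid r_{j-1}+r_{j+1}$ for $2\le j\le n$ come from the Ptolemy quadrilaterals $p,v_{j-1},v_j,v_{j+1}$ in the cut-open $(n+2)$-gon. Here the two sides $\overline{pv_{j\pm1}}$ are replaced by radii, and the boundary edges $v_{j-1}v_j$ and $v_jv_{j+1}$ have label $1$. For $s=1$ we use the remark that the notched spoke opens into an arc labelled $1$ between $v_1$ and $v_{n+1}$, so the same relations hold.

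Next we count the interior ones. The spokes other than $r_1$ are $s-1$ radii $r_j$ with $2\le j\le n$, and each has label $d=1$. So at least $h=s-1$ of the interior entries equal $1$; for $s=1$ this says $h=0$, which is trivially true.

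Now apply Corollary \ref{cor:ones}. For indices $0\le k'<\ell'\le n$ it gives
\[
a_{k'},a_{\ell'}\le F_{(n-1)+2-(s-1)}=F_{n+2-s}
\]
and
\[
a_{k'}a_{\ell'}\le F_{(n-1)+1-(s-1)}F_{(n-1)+2-(s-1)}=F_{n+1-s}F_{n+2-s}.
\]
Translating back by $k=k'+1$ and $\ell=\ell'+1$ covers every pair $k<\ell$ in $\{1,\ldots,n+1\}$. This gives the statement, with $r_1$ and $r_{n+1}$ both denoting the spoke label $1$.

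The main point needing care is the bookkeeping. We must make sure the chain has exactly length $n-1$, and that the spokes counted in $h$ are genuinely interior entries, i.e.\ distinct from the doubled endpoint $r_1=r_{n+1}$. This is where we use that $r_1$ was chosen to be a spoke and that the remaining $s-1$ spokes sit at distinct vertices $v_j$ with $2\le j\le n$. A small degenerate case deserves a remark: if $h\ge m$, every interior entry is $1$, and the bounds $1\le F_{n+2-s}$ and $1\le F_{n+1-s}F_{n+2-s}$ hold trivially because $s\le n$.
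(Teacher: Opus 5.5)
Your proposal is correct and follows the paper's argument: the paper gets Proposition~\ref{prop:radii} by applying Corollary~\ref{cor:ones} to the $A_{n-1}$-chain $(r_2,\ldots,r_n)$ with $h=s-1$ interior ones, which is what you do, with more explicit index bookkeeping. One small slip: with $m=n-1$ the right boundary entry is $a_{m+1}=a_n=r_{n+1}$, not $a_m$ (which equals $r_n$), although your later range $0\le k'<\ell'\le n$ already uses the correct convention.
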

This bounds the labels of the plain arcs involving $p$ (i.e. the plain radii). It remains to bound the other plain arcs, and the notched radii.


For arcs connecting $v_i$ and $v_j$ ($i\ne j$), there are two possibilities as they can go around $p$ in two ways\footnote{When $i$ and $j$ differ by one, one of the edges is the boundary edge. This is why we chose to include these edges in the triangulation.}. Denote their labels by $a$ and $a'$.

By the clock-hand rule, we have $a+a'=r_ir_j^{\bowtie}=r_ir_js$ by Observation \ref{obs:tagexchange}

Since $a$ and $a'$ are integers $\ge1$, we have
\begin{equation}\label{eq:arc-bound}
 a\le sr_ir_j-1
 \underset{\text{Proposition \ref{prop:radii}}}{\le} sF_{n+1-s}F_{n+2-s}-1
 \underset{\text{Lemma \ref{lem:fibonacci}below}}{\le} F_nF_{n+1}-1.
\end{equation}

We justify the last inequality.
\begin{lemma}\label{lem:fibonacci}We have $$sF_{n+1-s}F_{n+2-s}\le F_nF_{n+1}.$$\end{lemma}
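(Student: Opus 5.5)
The plan is to set $g(s)=sF_{n+1-s}F_{n+2-s}$ for $1\le s\le n$ and show $g(s)\le g(1)=F_nF_{n+1}$. The natural route is to compare consecutive values: I will show $g(s+1)\le g(s)$ for every $s\ge1$ in the relevant range. Put $m=n+1-s$, so that $g(s)=sF_mF_{m+1}$ and $g(s+1)=(s+1)F_{m-1}F_m$. After cancelling $F_m$ (positive whenever $m\ge1$; if $F_m=0$ the claim is trivial), the needed step becomes
\[
 (s+1)F_{m-1}\le sF_{m+1}=s(F_m+F_{m-1}),
 \qquad\text{that is,}\qquad F_{m-1}\le sF_m .
\]
This holds for all $s\ge1$ and $m\ge1$, because the Fibonacci sequence is nondecreasing from index $1$ on, so $F_{m-1}\le F_m$. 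For $m=1$ we have $F_0=0$, so the inequality is trivial there as well.

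Chaining these steps gives $g(s)\le g(s-1)\le\cdots\le g(1)=F_nF_{n+1}$, which is the lemma. At the endpoint $s=n$ we get $g(n)=nF_1F_2=n$, while the lemma asks for $n\le F_nF_{n+1}$. The monotone chain already covers this case, so no separate check is needed. It is still reassuring to confirm it directly for $n\ge4$: $F_4F_5=15\ge4$.

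The only delicate point is bookkeeping. We must make sure that $s$ ranges only over values with $n+1-s\ge0$, so that all the Fibonacci indices make sense. Since $s\le n$ (there are at most $n$ spokes), every index is $\ge1$ and the argument applies uniformly. I expect no real obstacle beyond this index check.
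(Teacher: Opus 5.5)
Your proof is correct and follows the paper's approach: both show that $s\mapsto sF_{n+1-s}F_{n+2-s}$ is nonincreasing by comparing consecutive terms, so that $g(s)\le g(1)=F_nF_{n+1}$. The only difference is cosmetic. The paper bounds the ratio using $F_{n+2-x}\ge 2F_{n-x}$ and $\frac{x+1}{x}\le 2$, whereas you reduce the step exactly to $F_{m-1}\le sF_m$.
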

\begin{proof} For an integer $1\leq x\leq n-1$, set $\Phi(x):=xF_{n+1-x}F_{n+2-x}$. Note that $$\frac{\Phi(x+1)}{\Phi(x)}=\frac{x+1}{x}\times \frac{F_{n-x}}{F_{n+2-x}}\leq\frac{x+1}{x}\times\frac{1}{2}\leq 1,$$
since $F_{n+2-x}\geq 2F_{n-x}.$ Thus, $\Phi(s)\leq \Phi(1)=F_nF_{n+1}.$
\end{proof}

The only labels to be bounded are the notched radii.
Recall from Proposition \ref{prop:radii} that $r_k\le F_{n+2-s}$ and by Observation \ref{obs:tagexchange} together with our assumption that $d=1$, $r_k^{\bowtie}\le sF_{n+2-s}<F_nF_{n+1}$ when $n\ge4$:
This is automatic for $s=1$. If $s\ge2$, note $F_{n+2-s}\le F_n$ and $s\le n<F_{n+1}$. We have proved:
\begin{theorem}\label{thm:unitary}
Assume the $D_n$-frieze comes from a plafond triangulation with 1's on the spokes. Then each label is at most $F_nF_{n+1}-1$.
\end{theorem}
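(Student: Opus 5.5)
The argument assembles the bounds already established in this section, checking every type of entry of the $D_n$-frieze in turn. By the dictionary between frieze entries and arcs, each $x_{i,j}$ is one of three things: a plain radius $r_j$, a notched radius $r_j^{\bowtie}$ (the case $i\in\{1,2\}$), or an arc joining two boundary vertices $v_j$ and $v_{j+i-2}$, possibly adjacent and running the long way around the puncture (the case $i\ge3$). I would treat these three classes separately, always with $d=1$. With $d=1$, Observation~\ref{obs:tagexchange} gives $r_v^{\bowtie}=s\,r_v$.

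\emph{Plain radii.} After cutting along the spoke $r_1$, Proposition~\ref{prop:radii} gives $r_k\le F_{n+2-s}$. Since $s\ge1$, this is at most $F_{n+1}$. For $n\ge4$ we have $F_{n+1}\le F_nF_{n+1}-1$, because $F_n\ge2$.

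\emph{Arcs between boundary vertices.} Such an arc joins $v_i$ and $v_j$ with $i\ne j$. Its complementary arc around the puncture is a genuine arc, or a boundary edge labelled $1$ when the vertices are adjacent; in either case its label $a'$ is a positive integer. The clock-hand rule together with Observation~\ref{obs:tagexchange} gives $a+a'=s\,r_ir_j$. Hence $a\le s\,r_ir_j-1$. Proposition~\ref{prop:radii}, applied to the two distinct indices $i<j$ in the cut chain, gives $r_ir_j\le F_{n+1-s}F_{n+2-s}$. Lemma~\ref{lem:fibonacci} then gives $s\,r_ir_j\le F_nF_{n+1}$, which is exactly \eqref{eq:arc-bound}.

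\emph{Notched radii.} Here $r_k^{\bowtie}=s\,r_k\le sF_{n+2-s}$. If $s=1$, the plain-radius bound applies directly. If $s\ge2$, then $F_{n+2-s}\le F_n$. Moreover $s\le n<F_{n+1}$ for $n\ge4$, so $sF_{n+2-s}<F_nF_{n+1}$. Integrality then gives $r_k^{\bowtie}\le F_nF_{n+1}-1$.

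The main obstacle is making the case analysis genuinely exhaustive. Three points need care. First, each entry $x_{i,j}$ with $i\ge3$ must really be a boundary-to-boundary arc whose complement has positive label; this is where including the boundary edges as arcs labelled $1$ matters. Second, for $s=1$ the cut along the plain and notched spoke must still yield an $A_{n-1}$-chain, as argued above, so that Proposition~\ref{prop:radii} applies. Third, the hypothesis $k<\ell$ in Proposition~\ref{prop:radii} must hold, which it does because $i\ne j$ for an arc between two boundary vertices.
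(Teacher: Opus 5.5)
Your proposal is correct and is essentially the paper's argument. It uses the same split into boundary-to-boundary arcs, bounded via the clock-hand rule, Observation~\ref{obs:tagexchange}, Proposition~\ref{prop:radii} and Lemma~\ref{lem:fibonacci} as in \eqref{eq:arc-bound}, and notched radii, bounded via $sF_{n+2-s}<F_nF_{n+1}$ from $F_{n+2-s}\le F_n$ and $s\le n<F_{n+1}$; the only difference is that you state explicitly the plain-radius bound $F_{n+2-s}\le F_{n+1}\le F_nF_{n+1}-1$, which the paper leaves implicit after Proposition~\ref{prop:radii}.
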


\section{The bound on any \texorpdfstring{$D_n$}{Dn}-frieze.}
Let $\fr$ be any $D_n$-frieze. By \cite[Proposition 3.2]{FontainePlamondon2016}, it has a plafond triangulation. Further, we can compare $\fr$ with the frieze $U_1$ (same triangulation, but spokes are labelled by 1) and $U_s$ (same triangulation, spokes labelled by $s$):
By \cite[Lemma 4.9 and sentence after its proof]{FontainePlamondon2016}, for any arc $a$ between boundary vertices has the same labels: $\fr(a)=U_1(a)=U_s(a)$, so that $\fr(a)\le F_nF_{n+1}-1$.
We would like to give the same estimate for radii. If $s=1$, there is nothing new to prove, as the only label for a spoke is 1. If $s\ge2$, choose any vertex $v$ and let $w$ be a vertex involving spokes and different from $v$, so that $U_1(r_w)=1$. By the clock-hand rule, if $a$ and $a'$ are the two arcs associated to $v$ and $w$ (going around either way around $p$), we have for any frieze $\G$:

\begin{center}\comparisonDigon $ \G(a)+\G(a')
 =\G(r_v)\G(r_w^{\bowtie})
 =\G(r_v^{\bowtie})\G(r_w)$.\end{center}

Thus,
\begin{equation}\label{eq:comparison}
\begin{aligned}
 U_1(r_v^{\bowtie})
 &=U_1(r_v^{\bowtie})U_1(r_w)
 =U_1(a)+U_1(a')\\
 &=\fr(a)+\fr(a')
 =\fr(r_v)\fr(r_w^{\bowtie}).
\end{aligned}
\end{equation}
Similarly, $U_1(r_v^{\bowtie})=\fr(r_v^{\bowtie})\fr(r_w)$.
Since $\fr(r_w)$ and $\fr(r_w^{\bowtie})$ are positive integers, i.e. $\geq1$, we have that $\fr(r_v),\fr(r_v^{\bowtie})\le U_1(r_v^{\bowtie})$.
Thus, any upper bound on $U_1(r_v^{\bowtie})$ is a bound on $\fr(r_v)$ and $\fr(r_v^{\bowtie})$.

\begin{corollary}\label{cor:D}
Let $n\geq4$ and $\fr$ be any $D_n$-frieze and $a$ be any entry. Then $\fr(a)\le F_nF_{n+1}-1$.
\end{corollary}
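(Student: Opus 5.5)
The plan is to split the entries of $\fr$ into two kinds: arcs between boundary vertices, and radii (plain or notched). Arcs between boundary vertices are already handled in the discussion just before the statement. By \cite[Lemma 4.9]{FontainePlamondon2016}, $\fr(a)=U_1(a)$ for every such arc $a$. Since $U_1$ is a frieze from a plafond triangulation with $1$'s on the spokes, Theorem \ref{thm:unitary} gives $U_1(a)\le F_nF_{n+1}-1$.

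For the radii, if $s=1$ then $d\mid s$ forces $d=1$. In that case $\fr=U_1$, and Theorem \ref{thm:unitary} applies directly. If $s\ge2$, we fix any boundary vertex $v$ and use the comparison \eqref{eq:comparison}. This requires choosing a vertex $w\neq v$ that carries a spoke; such a $w$ exists because there are $s\ge2$ spokes at distinct vertices, so at least one differs from $v$. From \eqref{eq:comparison} we get
\[
\fr(r_v),\ \fr(r_v^{\bowtie})\le U_1(r_v^{\bowtie}),
\]
because $\fr(r_w)$ and $\fr(r_w^{\bowtie})$ are positive integers. The notched radius $r_v^{\bowtie}$ in $U_1$ is a label of the frieze $U_1$, so Theorem \ref{thm:unitary} bounds it by $F_nF_{n+1}-1$. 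Concretely, Observation \ref{obs:tagexchange} with $d=1$ gives $U_1(r_v^{\bowtie})=sU_1(r_v)\le sF_{n+2-s}<F_nF_{n+1}$.

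Combining the two cases, every entry of $\fr$ is at most $F_nF_{n+1}-1$. The entries $x_{1,j},x_{2,j}$ are exactly the plain and notched radii, and the entries $x_{i,j}$ with $i\ge3$ are arcs between boundary vertices, so this covers every entry.

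The main point needing care is the comparison step. It relies on the clock-hand rule holding in $\fr$ and in $U_1$ with the same triangulation, and on the boundary arcs $a,a'$ having equal labels in both. The first is true because both are friezes built from the same plafond triangulation via the exchange rules. The second is the cited result of Fontaine--Plamondon. We also need that $U_1$ is itself a valid plafond frieze, which holds since $1\mid s$.
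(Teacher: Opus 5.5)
Your proposal is correct and follows the paper's proof essentially step for step. Boundary arcs are handled by the Fontaine--Plamondon comparison $\fr(a)=U_1(a)$ together with Theorem~\ref{thm:unitary}; for $s\ge2$, radii are handled by the clock-hand comparison \eqref{eq:comparison} with a spoke vertex $w\neq v$, giving $\fr(r_v),\fr(r_v^{\bowtie})\le U_1(r_v^{\bowtie})\le F_nF_{n+1}-1$, and the case $s=1$ (so $d=1$ and $\fr=U_1$) is treated exactly as in the paper.
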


\begin{corollary}\label{cor:B}
Let $n\geq2$. Let $\fr$ be any $B_n$-frieze and $a$ be an entry. Then $\fr(a)\le F_{n+1}F_{n+2}-1$.
\end{corollary}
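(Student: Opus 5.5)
The plan is to split into the cases $n\geq 3$ and $n=2$, following the outline in the introduction.

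For $n\geq3$, the goal is to lift the $B_n$-frieze to a $D_{n+1}$-frieze and apply Corollary~\ref{cor:D}. By \cite[Lemma~4.1 and proof of Theorem~4.3]{FontainePlamondon2016}, every positive integral $B_n$-frieze $\fr$ arises by folding a positive integral $D_{n+1}$-frieze $\G$. Concretely, the two fork rows of $\G$ coincide, and the remaining rows of $\G$ reproduce the rows of $\fr$. The entries of $\fr$ are therefore among the entries of $\G$; the doubled short-root row either appears verbatim or is determined by the equal fork entries, depending on the convention for which root of $B_n$ is short.

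The one step needing care is checking that this lift introduces no entries beyond those of $\fr$, and that every entry of $\fr$ is literally an entry of $\G$. I would do this by writing the frieze relation \eqref{eq:frieze} for $B_n$, where $|c_{k,i}|=2$ in the one asymmetric position. I would then compare it with the $D_{n+1}$ relations with the two fork entries set equal. With the convention matching the paper's array, the two fork rows are $x_{1,j}=x_{2,j}$, and the $B_n$ relation for the row adjacent to the fork, with its squared factor, becomes exactly the fork diamond $ei-fgh=1$ with $f=g$. This verification is the main obstacle, but it amounts to bookkeeping of conventions rather than anything substantive.

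Since $n+1\geq4$, Corollary~\ref{cor:D} then gives every entry of $\G$, and hence of $\fr$, the bound $F_{n+1}F_{n+2}-1$.

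For $n=2$, we have $B_2\cong C_2$: the transposed Cartan matrix with its rows swapped gives the same frieze relations up to relabeling the two rows. A positive integral $B_2$-frieze is thus a $C_2$-frieze with the rows interchanged. By \cite[Theorem~1(2)]{CheahSaintGermain2025}, its entries are bounded by $F_{2\cdot2+1}=F_5=5$. This agrees with $F_3F_4-1=2\cdot3-1=5$, which finishes the case $n=2$.
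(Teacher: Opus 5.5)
Your proposal is correct and follows the paper's proof. For $n\geq3$ you lift to a $D_{n+1}$-frieze via Fontaine--Plamondon and apply Corollary~\ref{cor:D} with $n+1\geq4$, and for $n=2$ you use $B_2\cong C_2$ together with the Cheah--de Saint Germain bound $F_5=5=F_3F_4-1$. Your explicit check is a welcome self-contained substitute for the citation: setting $x_{1,j}=x_{2,j}$ turns the fork relation $x_{3,j}x_{3,j+1}-x_{1,j+1}x_{2,j+1}x_{4,j}=1$ into the $B_n$ relation with the squared short-root factor. Only the inclusion of the entries of $\fr$ among those of the lift is actually needed for the bound; the ``no new entries'' direction is not.
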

\begin{proof}
For $n=2$, this is \cite[Theorem 1(2)]{CheahSaintGermain2025}. ($B_2$ and $C_2$
are isomorphic.)
For $n\ge3$, every $B_n$-frieze can be lifted to a $D_{n+1}$-frieze, cf.\ \cite[Lemma 4.1 and Proof of Theorem 4.3]{FontainePlamondon2016}.
\end{proof}

\begin{corollary}\label{cor:mainresult}
    Robin Zhang's conjecture \cite[Conjecture 3]{Zhang2026} is true.
\end{corollary}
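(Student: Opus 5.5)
Zhang's conjecture \cite[Conjecture~3]{Zhang2026} has two halves. The first is an upper bound: every entry of a positive integral $D_n$-frieze ($n\geq4$) is at most $F_nF_{n+1}-1$, and every entry of a positive integral $B_n$-frieze ($n\geq2$) is at most $F_{n+1}F_{n+2}-1$. The second is sharpness: these bounds are attained. The plan is to assemble the upper bounds from Corollaries \ref{cor:D} and \ref{cor:B}, and to take sharpness from Zhang's explicit constructions \cite[Theorem~2]{Zhang2026}. Zhang builds $D_n$-friezes whose maximal entry is $F_nF_{n+1}-1$ and $B_n$-friezes whose maximal entry is $F_{n+1}F_{n+2}-1$. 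Together, the two halves say that the maximum equals the conjectured value.

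The substantive step is to check that Corollary \ref{cor:D} really covers every entry. First I would list the entries of a $D_n$-frieze. By the dictionary after the Fontaine--Plamondon theorem, they are the labels of arcs between boundary vertices (both ways around $p$), together with plain and notched radii.

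For arcs between boundary vertices, \cite[Lemma~4.9]{FontainePlamondon2016} gives $\fr(a)=U_1(a)$. Theorem \ref{thm:unitary} then bounds this by $F_nF_{n+1}-1$.

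For radii with $s\geq2$, I would use equation \eqref{eq:comparison} and its notched analogue. These give $\fr(r_v),\fr(r_v^{\bowtie})\le U_1(r_v^{\bowtie})$, and Theorem \ref{thm:unitary} again bounds the right-hand side. One point needs care: the vertex $w$ must satisfy $w\ne v$. If $v$ is itself a spoke vertex, we need a second spoke vertex $w$, and this exists because $s\geq2$. When $s=1$ the spoke labels are $1$, so $d=1$ and $\fr=U_1$, and Theorem \ref{thm:unitary} applies directly.

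For $B_n$, Corollary \ref{cor:B} reduces the case $n\geq3$ to $D_{n+1}$ via the lift of \cite[Lemma~4.1]{FontainePlamondon2016}. That lift introduces no new entries, so the $D_{n+1}$ bound $F_{n+1}F_{n+2}-1$ transfers. The case $n=2$ follows from $B_2\cong C_2$ and \cite[Theorem~1(2)]{CheahSaintGermain2025}, since $F_5-1=4$ would be wrong but $F_{2\cdot2+1}=5$ equals $F_3F_4-1=5$.

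The step I expect to be the main obstacle is making sure that the comparison $\fr$ versus $U_1$ is legitimate. Specifically, $U_1$, with all spokes relabelled by $1$, must be an honest positive integral frieze, so that Theorem \ref{thm:unitary} applies to it. I would settle this by citing \cite[Theorem~4.7]{FontainePlamondon2016}: since $1\mid s$, the spoke label $d=1$ is admissible. Matching indexing conventions for the $B_n$ lift is routine.
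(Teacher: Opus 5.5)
Your proposal is correct and follows the paper's proof: the upper bounds come from Corollaries~\ref{cor:D} and~\ref{cor:B}, and sharpness comes from Zhang's constructions \cite[Theorem~2]{Zhang2026}. Your extra check that Corollary~\ref{cor:D} covers every entry retraces the argument the paper gives just before that corollary: boundary arcs via \cite[Lemma~4.9]{FontainePlamondon2016} and Theorem~\ref{thm:unitary}, radii via \eqref{eq:comparison} with a spoke vertex $w\neq v$, and $\fr=U_1$ when $s=1$.
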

\begin{proof}
    Zhang's lower-bound construction \cite[Theorem 2]{Zhang2026} together with the above corollaries give the exact conjectured maxima. We note that for reasons of well-definedness, the requirements $n\geq4$ in the $D_n$-case and $n\geq 2$ in the $B_n$ case were made in \cite[Conjecture 3]{Zhang2026}, see \cite[Section~2.1]{Zhang2026}.
\end{proof}

\section*{Acknowledgments}
We thank Robin Zhang for explaining the conjecture and surrounding topics to the author while they both overlapped at IHES in July 2026. We thank Pierre-Guy Plamondon and also Bruce Fontaine for clarifying various questions we had about \cite{FontainePlamondon2016}. We thank the IHES and Harvard University for their hospitality and excellent working conditions. We found the proposed proof of Robin Zhang's conjecture in Office 525 in the Science Center.
The author was supported by Simons grant MPS-TSM-00008075.
\small
\bibliographystyle{plain}
\bibliography{friezesrzhangconj}
\end{document}